\providecommand{\U}[1]{\protect\rule{.1in}{.1in}}
\newtheorem{theorem}{Theorem}
\newtheorem{corollary}[theorem]{Corollary}
\newtheorem{definition}[theorem]{Definition}
\newtheorem{proposition}[theorem]{Proposition}
\newtheorem{remark}[theorem]{Remark}
\newenvironment{proof}[1][Proof]{\noindent\textbf{#1.} }{\ \rule{0.5em}{0.5em}}
\newcommand{\D}{\mathbb{D}}
\newcommand{\E}{\mathbb{E}}
\newcommand{\Q}{\mathbb{Q}}
\newcommand{\R}{\mathbb{R}}
\newcommand{\T}{\mathbb{T}}
\newcommand{\Z}{\mathbb{Z}}
\newcommand{\wh}{\widehat}
\newcommand{\ovl}{\overline}
\newcommand{\ep}{\epsilon}
\newcommand{\bk}{\mathbf{k}}
\begin{document}

\title{Noise based on vortex structures in 2D and 3D}

\author{Franco Flandoli\footnote{Scuola Normale Superiore di Pisa. Piazza Dei Cavalieri 7. Pisa PI 56126, Italy. E-mail: franco.flandoli@sns.it} \, and Ruojun Huang\footnote{Fachbereich Mathematik und Informatik, Universit\"at Münster. Einsteinstr. 62, M\"unster 48149, Germany. E-mail: ruojun.huang@uni-muenster.de}}

\maketitle

\begin{abstract}
A new noise, based on vortex structures in 2D (point vortices) and 3D (vortex
filaments), is introduced. It is defined as the scaling limit of a jump
process which explores vortex structures and it can be defined in any domain,
also with boundary. The link with Fractional Gaussian Fields and Kraichnan noise is discussed. The vortex
noise is finally shown to be suitable for the investigation of the eddy
dissipation produced by small scale turbulence.

\end{abstract}

\tableofcontents

\section{Introduction}

The theory of Stochastic Partial Differential Equations (SPDEs) is nowadays
very well developed, see for instance \cite{DaPZa92, Hyt vol 1, PrRo07, RozLot}, with many contributions on fluid dynamics
models, like \cite{Breit Fe Hof, Ch78, Fla08, Fla11, Kuk Shi 2012, ViFu88}. However, with the exception of the
literature making use of Kraichnan noise, which is motivated in Fluid Dynamics
by its invariance and scaling properties, in most cases there is no discussion
about the origin of noise and its form, in connection with the fact that it is
part of a fluid dynamic model. The purpose of this work is to introduce an
example of noise based on vortex structures, both in 2D (point vortices) and
3D (vortex filaments). We discuss its motivations and interest for the
understanding of fluid properties.

Some preliminary forms in 2D have been introduced in \cite{FGL, Grotto}, but the noise defined here is different and goes much beyond,
in particular because we treat the 3D case on the basis of the theory of
random vortex filaments, see Section \ref{subsec:3D-noise}.

Usually, in general or theoretical works on SPDEs, the noise is either
specified by means of its covariance operator, or by means of a finite or
countable sum of space-functions multiplied by independent Brownian motions.
Here we start from a different viewpoint. Motivated by the emergence of vortex
structures in turbulent fluids, we idealize their production/emergence process
by means of a sequence of vortex impulses, mathematically structured using a
jump process taking values in a set of vortex structures. This is described in
Section \ref{sec:jump-noise}. A suitable scaling limit of this jump process gives rise to
a Gaussian noise in a suitable Hilbert space. Different examples of such noise
depend on different choices of the vortex structures and their statistics, at
the level of the jump process. An heuristic picture then emerges, of a process
that fluctuates very rapidly between the elements of a family of vortex
structures. And the realizations of this noise are made of vortex structures
which idealize those observed in turbulent fluids - point vortices in 2D and
vortex filaments in 3D.

This noise is motivated by turbulent fluids. In the Physical literature, the
most common noise related to turbulence are the Fractional Gaussian Field
(FGF) and Kraichnan noise, see for instance \cite{Apolinario, Chaves, Eyink, Eyink-2, Krai, Krai-3, Kr, Lodhia}. In Section \ref{sec:covariance} we show that, on a torus in two and three dimensions,
the vortex noise covers FGF and Kraichnan noise by a special choice of the
statistical properties of the regularization parameter and the vortex
intensity. The vortex noise is thus a flexible ensemble - it may cover also
multifractal formalisms, see also \cite{FG} - and its realizations are the limit,
as described in Sections \ref{sec:jump-noise}-\ref{sec:noise}, of localized-in-space vortex structures similar
to those observed in turbulent fluids.

Finally, another main motivation for this investigation has been the recent
results on eddy dissipation, showing that a transport type noise depending in
a suitable way on a scaling parameter, in a transport-diffusion equation, in
the scaling limit gives rise to an additional diffusion operator
\cite{Galeati, FGL}. These results requires that the covariance
function of the noise, computed along the diagonal, $Q\left(  x,x\right)  $,
is large; but the operator norm of the covariance is small. We check when the
vortex noise satisfies these conditions. Heuristically speaking, they are
satisfied when, in the scaling limit, the vortex structures defining the noise
are more and more concentrated at \textit{small scales}. This confirms the
belief that eddy diffusion is a consequence of turbulence, but only when it is
suitably small scale.

\section{Jump noise and its Gaussian limit}
\label{sec:jump-noise}

\subsection{Why jump vortex noise in fluid modeling}

When a fluid moves through the small obstacles of a boundary (hills, trees,
houses for the lower surface wind, mountains for the lower atmospheric layer,
coast irregularities for the sea, vegetation for a river) or it moves through
small obstacles in the middle of the domain (like islands in the sea),
vortices are created by these obstacles, sometimes with a regular rhythm (von Kármán vortices) sometimes else more irregularly. In principle, these vortices
are the deterministic consequence of the dynamical interaction between fluid
and structure but in very many applications we never write the details of
those obstacles, when a larger scale investigation is done. Hence it is
reasonable to re-introduce the appearance of these vortices, so important for
turbulence, in the form of an external perturbation of the equations of motion.

Assume that the velocity field at time $t$ is $u\left(  t,x\right)  $. We may
idealize the modification of $u\left(  t,x\right)  $ due to the emergence of a
new vortex near an obstacle as an event occuring in a very short time around
time $t$, so that we have a jump:%
\[
u\left(  t^{+},x\right)  =u\left(  t^{-},x\right)  +\sigma\left(  x\right)
\]
where $\sigma\left(  x\right)  $ is presumably localized in space and
corresponds to a vortex structure. Continuum mechanics does not make jumps; we
idealize a fast change due to an instability as a jump, for a cleaner
mathematical description.

We may develop the previous idea in two directions. The simplest one is
suitable for investigations like the effect of turbulence on passive scalars
\cite{Chaves}, where a simple model of random velocity field is chosen: we consider a
stepwise constant velocity field with jumps like those described above; later
on we shall take a suitable scaling limit and get a Gaussian velocity field,
delta correlated in time, with space correlation of very flexible form. A more
elaborate proposal is to consider the Navier-Stokes equations with an
impulsive force given by a process with jumps:

\[
\partial_{t}u+u\cdot\nabla u+\nabla p=\nu \Delta u+\sum_{k\in K}\sum_{i}%
\delta\left(  t-t_{i}^{k}\right)  \sigma_{k}.
\]
Here $K$ is an index set and, for each $k\in K$, we denote by $t_{1}^{k}%
<t_{2}^{k}<...$ the sequence of jump times of class $k$ and by $\sigma_{k}$
the vortex structure (described at the level of velocity field) arisen at time
$t_{i}^{k}$. This way the fluid moves according to the free Navier-Stokes
equations between two consecutive jumps times. In the next section we
formalize the noise $\sum_{k\in K}\sum_{i}\delta\left(  t-t_{i}^{k}\right)
\sigma_{k}$ or more precisely, similarly to what it is done for White noise
and Brownian motion, we formalize the time integral of this distributional
process:%
\begin{equation}
W_{t}^{0}\left(  x\right)  =\sum_{k\in K}\sum_{i}1\left\{  t\geq t_{i}%
^{k}\right\}  \sigma_{k}. \label{PPP heuristic}%
\end{equation}
In this first heuristic formulation it is natural to introduce an index set
$K$ but below we shall avoid this.

\subsection{Jump vortex noise}

Given an open domain $\D\subset\mathbb{R}^{d}$, $d=2,3$, denote by
$C_{\text{c,sol}}^{\infty}\left(  \D,\mathbb{R}^{d}\right)  $ the space of
smooth solenoidal vector fields with compact support in $\D$, and denote by $H$
the closure of $C_{\text{c,sol}}^{\infty}\left(  \D,\mathbb{R}^{d}\right)  $ in
$L^{2}\left(  \D,\mathbb{R}^{d}\right)  $. One can prove, under some regularity
of the boundary, that $u\in H$ is an $L^{2}\left(  \D,\mathbb{R}^{d}\right)
$-vector field, with distributional divergence equal to zero, tangent to the
boundary \cite{Temam1}. The norm $\left\Vert u\right\Vert _{H}$ is given by
$\left\Vert u\right\Vert _{H}^{2}=\int_{\D}\left\vert u\left(  x\right)
\right\vert ^{2}dx$.

The following scheme is taken from M\'etivier \cite{Metivier}, first three
Chapters. The main tightness and convergence results for martingales, as
described in \cite{Metivier}, are due to Rebolledo \cite{Rebolledo}.

Let $P$ be a Borel probability measure on $H$. Assume that
\begin{equation}
\int_{H}\varphi\left(\left\Vert h\right\Vert _{H}\right)P(dh)<\infty\label{new-assump}%
\end{equation}
for some nondecreasing $\varphi:\mathbb{R}_{+}\rightarrow\mathbb{R}_{+}$ that
grows faster than quadratic, i.e.
\begin{equation}
\lim_{n\rightarrow\infty}\frac{\varphi(n)}{n^{2}}=\infty.\label{sup-quadr}%
\end{equation}
Denote by $Q_{P}$ the trace class covariance operator defined as
\[
Q_{P}=\int_{H}h\otimes hP\left(  dh\right)  .
\]
Assume $P$ has zero average
\begin{equation}
m_{P}=\int_{H}hP\left(  dh\right)  =0.\label{assumpt zero average}%
\end{equation}
We may also define, a.s. in $x,y\in \D$, the covariance (matrix-valued)
function%
\[
Q_{P}\left(  x,y\right)  =\int_{H}h\left(  x\right)  \otimes h\left(
y\right)  P\left(  dh\right)  .
\]
Indeed, $\int_{H}\left\vert h\left(  x\right)  \right\vert ^{2}P\left(
dh\right)  <\infty$ for a.e. $x\in \D$, thanks to Fubini-Tonelli theorem, since
$\int_{H}\left(  \int_{\D}\left\vert h\left(  x\right)  \right\vert
^{2}dx\right)  P\left(  dh\right)  <\infty$.

Consider the continuous time jump Markov process in $H$ with law of jumps
\[
p\left(  v,v+A\right)  =\frac{1}{\tau}P\left(  A\right)
\]
($v\in H$, $A\in\mathcal{B}\left(  H\right)  $), namely with infinitesimal
generator%
\[
\left(  \mathcal{L}F\right)  \left(  v\right)  =\frac{1}{\tau}\int_{H}\left(
F\left(  v+h\right)  -F\left(  v\right)  \right)  P\left(  dh\right)
\]
for all bounded continuous functions $F:H\rightarrow\mathbb{R}$. Here $\tau>0$
is the average interarrival between jumps. Denote by $W_{t}^{0}$ the
corresponding Markov process with initial condition $W_{0}^{0}=0$. Dynkin
formula%
\[
F\left(  W_{t}^{0}\right)  -F\left(  0\right)  =\int_{0}^{t}\left(
\mathcal{L}F\right)  \left(  W_{s}^{0}\right)  ds+M_{t}^{F}%
\]
gives us a the decomposition in a finite variation plus a martingale term.
Consider first the case when $F_{1}\left(  v\right)  =v$ (here we do not write
down classical details, namely that the computation should be done for a
continuous bounded cut-off of each component $\left\langle v,e_{i}%
\right\rangle $, where $\left(  e_{i}\right)  $ is a complete orthonormal
system, see \cite[page 14]{Metivier}). One has%
\[
\left(  \mathcal{L}F_{1}\right)  \left(  v\right)  =\frac{1}{\tau}\int%
_{H}\left(  v+h-v\right)  P\left(  dh\right)  =0
\]
because $m_{P}=0$. Hence $W_{t}^{0}=M_{t}^{F_{1}}$ namely the process
$W_{t}^{0}$ is a martingale. Let us compute its Hilbert-space-valued Meyer
process $\left\langle \left\langle W^{0}\right\rangle \right\rangle _{t}$. We
use the function $F_{2}\left(  v\right)  =v\otimes v$ (again one has to do the
computation first for a cut-off of the functions $\left\langle v,e_{i}%
\right\rangle \left\langle v,e_{j}\right\rangle $):%
\begin{align*}
\left(  \mathcal{L}F_{2}\right)  \left(  v\right)   &  =\frac{1}{\tau}\int%
_{H}\left(  \left(  v+h\right)  \otimes\left(  v+h\right)  -v\otimes v\right)
P\left(  dh\right) \\
&  =\frac{1}{\tau}\int_{H}\left(  v\otimes h+h\otimes v+h\otimes h\right)
P\left(  dh\right) \\
&  =\frac{1}{\tau}Q_{P}.
\end{align*}
Therefore $W_{t}^{0}\otimes W_{t}^{0}=\frac{t}{\tau}Q_{P}+M_{t}^{F_{2}}$. The
Meyer process $\left\langle \left\langle W^{0}\right\rangle \right\rangle
_{t}$ is thus (see the definition in \cite[pp. 8-12]{Metivier})
\[
\left\langle \left\langle W\right\rangle \right\rangle _{t}=\frac{t}{\tau
}Q_{P}.
\]

\subsection{Convergence of the rescaled process to a Brownian motion}

Let us now parametrize and rescale the previous process. We take average
interarrival between jumps given by
\[
\tau_{N}=\frac{1}{N^{2}}%
\]
and we reduce by $\frac{1}{N}$ the size of jumps by considering a probability
measure $P_{N}$ on $H$ with zero average $m_{P}=\int_{H}hP_{N}\left(
dh\right)  =0$ and covariance $Q_{P_{N}}$ given by%
\[
Q_{P_{N}}=\frac{1}{N^{2}}Q_{P}.
\]
Consider the associated process $W_{t}^{N}$, a martingale with Meyer process%
\[
\left\langle \left\langle W^{N}\right\rangle \right\rangle _{t}=\frac{t}%
{\tau_{N}}Q_{P_{N}}=tQ_{P}.
\]

\begin{definition}
Given $Q_{P}$, denote by $\left(  W_{t}\right)  _{t\geq0}$ a Brownian motion
on $H$ with incremental covariance $Q_{P}$.
\end{definition}

\begin{theorem}\label{thm:inv-prin}
The process $\left(  W_{t}^{N}\right)  _{t\geq0}$ converges in law to $\left(
W_{t}\right)  _{t\geq0}$, uniformly on every compact set of time, as processes
with values in $H$.
\end{theorem}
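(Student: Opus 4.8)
The plan is to establish convergence in law of the martingales $(W^N_t)$ to the $Q_P$-Brownian motion by invoking the martingale invariance principle of Rebolledo, as packaged in M\'etivier \cite{Metivier}. This theorem characterizes a continuous $H$-valued Gaussian martingale with a prescribed deterministic Meyer process as the weak limit of a sequence of $H$-valued martingales, provided two conditions hold: (i) the sequence of Meyer (angle-bracket) processes converges to the deterministic limit, and (ii) a tightness condition holds, most conveniently verified through control of the jumps together with the predictable quadratic variation. The computations in the excerpt already supply the crucial algebraic identity $\langle\langle W^N\rangle\rangle_t = tQ_P$, so condition (i) is satisfied exactly, not merely asymptotically; this is the conceptual heart of the rescaling $\tau_N = N^{-2}$, $Q_{P_N} = N^{-2}Q_P$.

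First I would state precisely the version of Rebolledo's theorem I am using, emphasizing that a limiting continuous Gaussian martingale with deterministic Meyer process $tQ_P$ is exactly the $Q_P$-Brownian motion $(W_t)$ of the Definition, so identifying the limit is immediate once tightness and convergence of brackets are in hand. Next I would verify tightness of $(W^N)$ in the Skorokhod space $D([0,T];H)$. The standard route is Aldous's criterion combined with a compact-containment (flat concentration) condition: since $\langle\langle W^N\rangle\rangle_t = tQ_P$ with $Q_P$ trace class and independent of $N$, the expected squared norm $\mathbb{E}\|W^N_t\|_H^2 = t\,\mathrm{tr}(Q_P)$ is uniformly bounded, and truncating at a finite-dimensional projection using the trace-class property yields compact containment. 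Aldous's condition on stopping times follows from the same uniform bound on the increments of the bracket process, which are Lipschitz in time.

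The key step that makes the jump process converge to a continuous limit is the asymptotic negligibility of the jumps: I would show that the largest jump vanishes in probability, i.e. $\sup_{t\le T}\|W^N_{t} - W^N_{t^-}\|_H \to 0$ in probability. This is where the superquadratic growth assumption \eqref{sup-quadr} and the integrability assumption \eqref{new-assump} enter decisively. Each jump of $W^N$ is a sample from $P_N$, whose typical size scales like $1/N$; the number of jumps up to time $T$ is of order $T/\tau_N = TN^2$. A union bound over these jumps, controlling $P_N(\|h\|_H > \epsilon)$ via Markov's inequality applied to $\varphi$, forces the probability of a jump exceeding $\epsilon$ to vanish precisely because $\varphi(n)/n^2 \to \infty$ dominates the $N^2$ growth in the number of jumps. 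I regard this jump-smallness estimate as the main obstacle, since it is the one place where the quantitative hypotheses on $P$ must be translated carefully through the $1/N$ rescaling of $P_N$; everything else is the by-now-standard machinery of the functional central limit theorem for martingales.

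Finally, with tightness established and the jumps shown to be asymptotically negligible, I would conclude by appealing to the martingale central limit theorem: any weak limit point of $(W^N)$ is a continuous $H$-valued martingale (continuity from the vanishing of the largest jump), with Meyer process equal to $\lim_t \langle\langle W^N\rangle\rangle_t = tQ_P$, hence a Gaussian process coinciding in law with $(W_t)$. Since the limit is uniquely identified, the full sequence converges, and because the limit is continuous the convergence upgrades from the Skorokhod topology to uniform convergence on compact time intervals, which is the stated conclusion.
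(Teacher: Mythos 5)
Your proposal is correct and follows essentially the same route as the paper: tightness via the Rebolledo--M\'etivier martingale criterion (with the Meyer process identically equal to $tQ_P$), identification of any limit point as a continuous martingale with deterministic bracket $tQ_P$, and the decisive jump-negligibility estimate obtained by bounding the maximal jump over the $O(TN^2)$ Poisson arrivals with Markov's inequality applied to $\varphi$, exactly exploiting \eqref{new-assump}--\eqref{sup-quadr}. The only cosmetic difference is that you package tightness through Aldous plus compact containment while the paper cites tightness of the brackets directly, and you phrase the continuity transfer informally where the paper invokes the Portmanteau theorem on the open set of paths with a large jump.
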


\begin{proof}
Using classical theorem of tightness for martingales (cf. \cite{Metivier},
Chapter 2, \cite{Rebolledo}), we have that the family of laws of the processes
$\left(  W^{N}\right)  _{N}$ is tight in the Skorohod space (because the
family of laws of $\left\langle \left\langle W^{N}\right\rangle \right\rangle
$ is tight) and every convergent subsequence has limit given by the law of a
martingale $W_{t}$ with $W_{0}=0$ and Meyer process%
\[
\left\langle \left\langle W\right\rangle \right\rangle _{t}=tQ_{P}.
\]
If we establish that $W$ has continuous paths, then it is a Brownian motion
with incremental covariance $Q_{P}$. One can prove that%
\begin{equation}
\lim_{N\rightarrow\infty}\mathbb{P}\left(  \sup_{s\in\left[  0,T\right]
}\left\Vert \Delta_{s}W^{N}\right\Vert _{H}>\epsilon\right)
=0\label{tail-incre}%
\end{equation}
where $\left\Vert \Delta_{s}W^{N}\right\Vert _{H}$ is the size of the jump (if
any) at time $s$ ($W^{N}$ is c\`adl\`ag). Since the set $\left\{  \sup
_{s\in\left[  0,T\right]  }\left\Vert \Delta_{s}w^{N}\right\Vert _{H}%
>\epsilon\right\}  $ is open in the Skorohod topology, from Portmanteau
theorem we get
\[
\mathbb{P}\left(  \sup_{s\in\left[  0,T\right]  }\left\Vert \Delta
_{s}W\right\Vert _{H}>\epsilon\right)  =0
\]
for every $\epsilon>0$, hence $W$ is continuous. To show (\ref{tail-incre}),
denote by $\{s_{i}\}_{i=0}^{N_{T}}\subset\lbrack0,T]$ the Poisson $(\tau
_{N}^{-1})$ arrival times, then we have that
\begin{align*}
\mathbb{P}\left(  \sup_{s\in\left[  0,T\right]  }\left\Vert \Delta_{s}%
W^{N}\right\Vert _{H}>\epsilon\right)   &  =1-\mathbb{P}\left(  \bigcap
_{\{s_{i}\}\subset\lbrack0,T]}\left\{  \left\Vert \Delta_{s_{i}}%
W^{N}\right\Vert _{H}\leq\epsilon\right\}  \right)  \\
&  =1-\mathbb{E}\left[  \prod_{\{s_{i}\}\subset\lbrack0,T]}\mathbb{P}\left(
\left\Vert \Delta_{s_{i}}W^{N}\right\Vert _{H}\leq\epsilon\;\left\vert
\;\{s_{i}\}_{i=0}^{N_{T}}\right.  \right)  \right]  \\
&  =1-\mathbb{E}\left[  \left[  1-\mathbb{P}\left(  \left\Vert \Delta
W^{N}\right\Vert _{H}>\epsilon\right)  \right]  ^{N_{T}}\right]
\end{align*}
where we used that given the Poisson arrival times, the laws of each jump size
$\left\Vert \Delta_{s_{i}}W^{N}\right\Vert _{H}$ is independent of it, and
identically distributed as what we simply denote by $\left\Vert \Delta
W^{N}\right\Vert _{H}$. By the elementary inequality $(1-y)^{n}\geq1-ny$ for
any $y\in\lbrack0,1]$ and $n\in\mathbb{N}$, and Markov's inequality, we have
that
\begin{align*}
\mathbb{P}\left(  \sup_{s\in\left[  0,T\right]  }\left\Vert \Delta_{s}%
W^{N}\right\Vert _{H}>\epsilon\right)   &  \leq\mathbb{E}[N_{T}]\mathbb{P}%
\left(  \left\Vert \Delta W^{N}\right\Vert _{H}>\epsilon\right)  \\
&  \leq\frac{TN^{2}}{\varphi(N\epsilon)}\mathbb{E}\left[  \varphi(\left\Vert
\Delta W\right\Vert _{H})\right]  \\
&  =\frac{TN^{2}}{\varphi(N\epsilon)}\int_{H}\varphi(\left\Vert h\right\Vert
_{H})P(dh)
\end{align*}
which is finite by (\ref{new-assump}), and converges to zero as $N\rightarrow
\infty$ by (\ref{sup-quadr}).
\end{proof}

\subsection{Reformulation as a PPP}

This is a side section which however may help the intuition (see also
(\ref{PPP heuristic})): we reformulate the jump process $W_{t}^{0}$ as a
Poisson Point Process (PPP). On a probability space $\left(  \Omega
,\mathcal{F},\mathbb{P}\right)  $, Let $\mathcal{P}$ be a PPP on
$[0,\infty)\times H$ with intensity measure $\lambda Leb\otimes P$, where
$\lambda Leb$ is Lebesgue measure scaled by $\lambda>0$, and $P$ is the
probability measure introduced in the previous subsections. Heuristically%
\[
\mathcal{P}\left(  dt,du\right)  =\sum_{i}\delta_{\left(  t_{i},\sigma
_{i}\right)  }\left(  dt,du\right)
\]
where $\left(  t_{i},\sigma_{i}\right)  $ is an i.i.d. sequence with $t_{i}$
``uniformly distributed on $[0,\infty)$", $\sigma_{i}$ distributed according to
$P$, $t_{i}$ and $\sigma_{i}$ independent of each other. Define the vector
valued random field, defined on $\left(  \Omega,\mathcal{F},\mathbb{P}\right)
$,
\[
W_{t}^{0}\left(  x\right)  =\sum_{t_{i}\leq t}\sigma_{i}\left(  x\right)
=\sum_{i}\sigma_{i}\left(  x\right)  1\left\{  t_{i}\leq t\right\}  .
\]
Compared to (\ref{PPP heuristic}), we may think that $K$ in that formula was a
finite set and we have simply reordered the jump times $\left(  t_{i}%
^{k}\right)  $ in a single sequence $\left(  t_{i}\right)  $ and we have
renamed the jump velocity fields. This definition is slightly heuristic
because it makes use of the representation as infinite sum which is true only
in a suitable limit sense; a rigorous definition of $W\left(  t,x\right)  $ is%
\[
W_{t}^{0}\left(  x\right)  =\int_{[0,\infty)\times H}u\left(  x\right)
1\left\{  t^{\prime}\leq t\right\}  \mathcal{P}\left(  dt^{\prime},du\right)
.
\]
However, in the sequel, for the sake of interpretability, we shall always use
the heuristic expressions.

The intuition is that eddies $\sigma_{i}\left(  x\right)  $ are chosen at
random with distribution $P$, with exponential inter-arrival times of rate
$\lambda$. Condition (\ref{assumpt zero average}) asks, heuristically
speaking, that both an eddy and its opposite are equally likely to be chosen.

Rescale $W_{t}^{0}\left(  x\right)  $ as%
\[
W_{t}^{N}\left(  x\right)  =\frac{1}{N}\sum_{i}\sigma_{i}\left(  x\right)
1\left\{  t_{i}\leq N^{2}t\right\}  .
\]
Let us compute the expectation and the covariance function of this process.
One has ($\mathbb{E}$ denotes the Mathematical expectation on $\left(
\Omega,\mathcal{F},\mathbb{P}\right)  $)
\[
\mathbb{E}\left[  W_{t}^{N}\left(  x\right)  \right]  =0
\]
from the independences and condition (\ref{assumpt zero average}). Moreover,%
\[
\mathbb{E}\left[  W_{t}^{N}\left(  x\right)  \otimes W_{t}^{N}\left(
y\right)  \right]  =\frac{1}{N^{2}}\sum_{i}\mathbb{E}\left[  \sigma_{i}\left(
x\right)  \otimes\sigma_{i}\left(  y\right)  1\left\{  t_{i}\leq
N^{2}t\right\}  \right]
\]
having used the independence when $i\neq j$ and property
(\ref{assumpt zero average}) again; hence%
\[
=\frac{Q_{P}\left(  x,y\right)  }{N^{2}}\sum_{i}\mathbb{P}\left(  t_{i}\leq
N^{2}t\right)  .
\]

\begin{proposition}%
\[
\sum_{i}\mathbb{P}\left(  t_{i}\leq N^{2}t\right)  =N^{2}\lambda t.
\]
Hence%
\[
\mathbb{E}\left[  W_{t}^{N}\left(  x\right)  \otimes W_{t}^{N}\left(
y\right)  \right]  =\lambda tQ_{P}\left(  x,y\right)  .
\]

\end{proposition}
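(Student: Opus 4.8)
The plan is to read the left-hand side as the expected number of Poisson arrival times falling in the window $[0,N^{2}t]$, and then to evaluate it via the first-moment (mean-measure) property of the PPP $\cP$. This keeps the argument rigorous and sidesteps the heuristic ``i.i.d.\ sequence $(t_i,\sigma_i)$'' description used for intuition.

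First I would pass from a sum of probabilities to the expectation of a counting functional. Since every summand is nonnegative, Tonelli's theorem gives
\[
\sum_{i}\P\left(t_{i}\leq N^{2}t\right)=\E\left[\sum_{i}1\left\{t_{i}\leq N^{2}t\right\}\right].
\]
The sum inside the expectation counts exactly those points of $\cP$ whose time coordinate lies in $[0,N^{2}t]$, with no constraint on the $H$-coordinate; that is, it equals the total mass $\cP\left([0,N^{2}t]\times H\right)$ of the process over that time slab. I would then invoke the defining property that the expected number of points of a PPP in a measurable set equals its intensity measure. With intensity $\lambda\,Leb\otimes P$ this yields
\[
\E\left[\cP\left([0,N^{2}t]\times H\right)\right]=\left(\lambda\,Leb\otimes P\right)\left([0,N^{2}t]\times H\right)=\lambda\,Leb\left([0,N^{2}t]\right)P(H)=\lambda N^{2}t,
\]
using $P(H)=1$, which is the first asserted identity. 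An equivalent route, for readers who prefer the ordered arrivals, observes that the $i$-th arrival time is a sum of $i$ independent rate-$\lambda$ exponentials, so $\P(t_{i}\leq s)$ is the $\mathrm{Gamma}(i,\lambda)$ distribution function and $\sum_{i}\P(t_{i}\leq s)=\E\left[\#\{i:t_{i}\leq s\}\right]=\lambda s$ once more.

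The stated covariance formula then follows by direct substitution into the expression for $\E\left[W_{t}^{N}(x)\otimes W_{t}^{N}(y)\right]$ derived immediately above the proposition: inserting $\sum_{i}\P(t_{i}\leq N^{2}t)=N^{2}\lambda t$ cancels the prefactor $N^{-2}$ and leaves $\lambda t\,Q_{P}(x,y)$, as claimed.

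I do not expect a genuine obstacle here. The only points requiring a word of care are the interchange of summation and expectation, which is harmless because all terms are nonnegative, and the identification of the intuitive sum $\sum_{i}1\{t_{i}\leq N^{2}t\}$ with the rigorous PPP count $\cP([0,N^{2}t]\times H)$; both are standard once the process is set up as in the previous subsection. The substantive content is simply the mean-measure identity for $\cP$.
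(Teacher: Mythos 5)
Your argument is correct and is essentially the paper's own proof: the authors likewise rewrite $\sum_{i}\P(t_{i}\leq N^{2}t)$ as $\E\left[\sum_{i}1\{t_{i}\leq N^{2}t\}\right]=\E\left[\eta_{\lambda}(N^{2}t)\right]=N^{2}\lambda t$ for a rate-$\lambda$ Poisson process $\eta_{\lambda}$, which is exactly your mean-measure identity in slightly less formal dress. Your additional care about Tonelli and the identification with $\cP([0,N^{2}t]\times H)$ is fine but not a different route.
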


\begin{proof}
We note that
\[
\sum_{i}\mathbb{P}\left(  t_{i}\leq N^{2}t\right)  =\mathbb{E}\left[  \sum
_{i}1\left\{  t_{i}\leq N^{2}t\right\}  \right]  =\mathbb{E}\left[
\eta_{\lambda}(N^{2}t)\right]  =N^{2}\lambda t.
\]
where $\eta_{\lambda}(\cdot)$ denotes a Poisson process on $\mathbb{R}_{+}$
with intensity $\lambda$.
\end{proof}

This is another way of seeing the link between the noise with jumps and the
covariance of the limit Brownian motion.

\section{Examples in 2D and 3D}\label{sec:noise}

The mathematical object discussed in the previous section, although initially
motivated by vortex structures, were completely general: given any probability measure $P$ on $H$ with covariance $Q_P$, the previous
construction and results apply and defines a Brownian motion $W_t$ in $H$ with covariance operator $Q_P$. Notice that $P$ is not necessarily Gaussian: $P$ and $W_{1}$ both have covariance $Q_P$, but only $W_{1}$ needs to be Gaussian. In a sense,
we ``realize" approximately samples of the Brownian motion $W_{t}$ by means of
samples of a possibly "nonlinear" (non Gaussian) process $W_{t}^{N}$.

In this section we give our two main examples of the measure $P$, highly non
Gaussian. It is inspired by vortex structures.

Common to both descriptions are a few objects. First, given $\delta>0$, we
define
\[
\D_{\delta}:=\left\{  x\in \D:\;\text{dist}(x,\D^{c})>\delta\right\}  .
\]
Second, we have a filtered probability space $\left(  \Omega,\mathcal{F}%
,\mathcal{F}_{t},\mathbb{P}\right)  $ and several $\mathcal{F}_{0}$-measurable
r.v.'s: a) $X_{0}$ with law $p_{0}\left(  dx\right)  $ supported on
$\D_{\delta}$, which will play the role of the center of the vortex in 2D and
the initial position of the vortex filament in 3D; b) $\Gamma $, real
valued, with the physical meaning of circulation, with%
\begin{align*}
\mathbb{E}\left[  \Gamma\right]    & =0,\qquad\mathbb{E}\left[  \left\vert
\Gamma\right\vert ^{p}\right]  <\infty\text{ for some }p>2\\
\sigma^{2}  & :=\mathbb{E}\left[  \Gamma^{2}\right]  ;
\end{align*}
c) $L$, positive valued, randomizing the size of the mollification, with the
property%
\begin{align}\label{small-L}
\mathbb{P}\left(  L\in\left(  0,\delta/2\right)  \right)  =1;
\end{align}
d) $U$, positive valued, randomizing the length of the vortex filament.
Moreover, in 3D, we also have: e) a Brownian motion on $\left(  \Omega
,\mathcal{F},\mathcal{F}_{t},\mathbb{P}\right)  $ with values in
$\mathbb{R}^{3}$. In the 2D case we just take $\mathcal{F}=\mathcal{F}_{0}$
and do not need the filtration. 

For sake of simplicity of exposition, we shall assume that $X_{0}%
,\Gamma, L$ and $U$ are independent, but most of the results can be extended to more general cases. 

The last common element of the theory is a smooth symmetric probability
density $\theta$ supported in the ball $B\left(  0,1\right)  $ and its
rescaled mollifiers 
\begin{align}\label{dirac}
\theta_{\ell}\left(  x\right)  =\ell^{-d}\theta\left(
\ell^{-1}x\right), 
\end{align}
with support in $B\left(  0,\ell\right)  $. In particular, $\theta_\ell\ge 0$ and $\int\theta_\ell(x)dx=1$.

\subsection{Point vortices and definition of $P$ in the 2D case}

In 2D, by a point vortex we mean a vorticity field of delta Dirac type,
$\delta_{x_{0}}$; its use in 2D fluid mechanics is manifold, see for instance
\cite{MarchioroPulvirenti}. If the vorticity is assumed distributional and
equal to $\delta_{x_{0}}$, with $x_{0}$ in the interior of $\D$, then the
so-called stream function $\psi_{\D,x_{0}}$ is given by the solution of%
\begin{align*}
-\Delta\psi_{\D,x_{0}} &  =\delta_{x_{0}}\qquad\text{in }\D\\
\psi_{\D,x_{0}}|_{\partial \D} &  =0
\end{align*}
and the associated velocity vector field is given by:%
\[
u_{\D,x_{0}}\left(  x\right)  =\nabla^{\perp}\psi_{\D,x_{0}}\left(  x\right)
\]
where $\nabla^{\perp}f=\left(  \partial_{2}f,-\partial_{1}f\right)  $. One has%
\[
\psi_{\D,x_{0}}\left(  x\right)  =\frac{1}{2\pi}\log\frac{1}{\left\vert
x-x_{0}\right\vert }+h_{\D,x_{0}}\left(  x\right)
\]
where $h_{\D,x_{0}}$ is a smooth function, solution of the problem%
\begin{align*}
-\Delta_{x}h_{\D,x_{0}} &  =0\text{ in }\D\\
h_{\D,x_{0}}\left(  x\right)   &  =\frac{1}{2\pi}\log\left\vert x-x_{0}%
\right\vert \text{ for }x\in\partial \D.
\end{align*}
In the sequel, as it is customary, we shall denote $u_{\D,x_{0}}\left(
x\right)  $ simply by $K\left(  x,x_{0}\right)  $. Hence%
\begin{align}\label{2d-kernel}
K\left(  x,x_{0}\right)  =-\frac{1}{2\pi}\frac{\left(  x-x_{0}\right)  ^{\perp
}}{\left\vert x-x_{0}\right\vert ^{2}}+\nabla^{\perp}h_{\D,x_{0}}\left(
x\right)  ,
\end{align}
where $x^\perp=(x_2, -x_1)$.

Recall that $\theta_\ell$ \eqref{dirac}, as $\ell\to0$ is an approximation of the Dirac delta function. Expressions of the form $\theta_{\ell}(x-x_{0})$ are idealized smoothed point
vortices, at the vorticity level, and the associated velocity field is
\begin{align*}
K_\ell(x,x_0):=\int_{D}K\left(  x,y\right)  \theta_{\ell}\left(  y-x_{0}\right)  dy.
\end{align*}
With
these preliminaries, let us define $P$.

\begin{definition}
\label{def P 2D}In the 2D case, the probability measure $P$ on the space $H$
is the law of the $H$-valued r.v.%
\begin{align}\label{def-2d}
\Gamma K_L(x,X_0)=\Gamma\int_{\D}K\left(  x,y\right)  \theta_{L}\left(  y-X_{0}\right)  dy.
\end{align}
\end{definition}

For future reference, the spatial covariance matrix of the vortex noise  in 2D is given by 
\begin{align}\label{smoothing-1}
Q_{\text{vortex}}(x,x')=\E\left[\Gamma K_L(x,X_0) \otimes \Gamma K_L(x',X_0) \right], \quad x,x'\in\D.
\end{align}

\begin{proposition}
The random vector field of Definition \ref{def P 2D} takes values in $H$. If%
\[
\E\left(  |\Gamma|^{p}L^{-p}\right)  <\infty,
\]
for some $p>2$, then it satisfies (\ref{new-assump})-(\ref{sup-quadr}).
Moreover, it satisfies (\ref{assumpt zero average}).
\end{proposition}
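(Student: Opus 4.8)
The plan is to verify the three required properties in turn, treating the regularity of the kernel as the crux of the matter. First I would establish that the random vector field $\Gamma K_L(\cdot, X_0)$ takes values in $H$. Since $\theta_L(\cdot - X_0)$ is a smooth, compactly supported density, the convolution $K_L(x, X_0) = \int_{\D} K(x,y)\theta_L(y - X_0)\,dy$ is a smooth velocity field obtained as $\nabla^\perp$ of a smooth stream function; because $\theta_L$ is supported in $B(0, L)$ with $L < \delta/2$ and $X_0 \in \D_\delta$, the smoothed vorticity $\theta_L(\cdot - X_0)$ has support compactly contained in $\D$, so the associated velocity field is solenoidal, tangent to $\partial\D$, and square-integrable. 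Thus $K_L(\cdot, X_0) \in H$ pathwise, and multiplying by the scalar $\Gamma$ keeps it in $H$.

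Next I would check the zero-average condition \eqref{assumpt zero average}. Since $\Gamma$ is independent of $(X_0, L)$ and $\E[\Gamma] = 0$, conditioning on $(X_0, L)$ gives $\E[\Gamma K_L(\cdot, X_0)] = \E[\Gamma]\,\E[K_L(\cdot, X_0)] = 0$ as an element of $H$. The main work is the moment bound \eqref{new-assump}--\eqref{sup-quadr}. The natural choice is $\varphi(r) = r^p$ for the same $p > 2$ appearing in the hypothesis, which is nondecreasing and super-quadratic since $p > 2$ forces $\varphi(n)/n^2 = n^{p-2} \to \infty$. It then suffices to show
\[
\int_H \|h\|_H^p\, P(dh) = \E\!\left[\,\|\Gamma K_L(\cdot, X_0)\|_H^p\,\right] = \E\!\left[\,|\Gamma|^p\, \|K_L(\cdot, X_0)\|_H^p\,\right] < \infty.
\]
By independence of $\Gamma$ from $(X_0, L)$ this factors as $\E[|\Gamma|^p]\,\E[\|K_L(\cdot,X_0)\|_H^p]$; wait—more carefully, since $\|K_L(\cdot,X_0)\|_H$ depends on $L$, I would instead bound $\|K_L(\cdot, X_0)\|_H \leq C L^{-1}$ uniformly in $X_0 \in \D_\delta$ and then invoke $\E[|\Gamma|^p L^{-p}] < \infty$.

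The hard part will be the scaling estimate $\|K_L(\cdot, X_0)\|_H \leq C L^{-1}$. Here I would split the kernel \eqref{2d-kernel} into its singular Biot--Savart part $-\frac{1}{2\pi}(x-x_0)^\perp/|x-x_0|^2$ and the smooth harmonic correction $\nabla^\perp h_{\D, x_0}$. For the singular part, smoothing against $\theta_L$ yields a velocity field whose $L^2$ norm scales like $L^{-1}$: by the change of variables $y = X_0 + L z$ and the homogeneity of degree $-1$ of the Biot--Savart kernel, the mollified field is a rescaling that concentrates at scale $L$, and a direct computation (or the standard fact that the $H^{-1}$-to-$L^2$ smoothing of a point vortex at scale $L$ has energy $\sim \log(1/L)$, hence is $O(L^{-1})$ as a crude bound) controls the norm. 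The correction term $\nabla^\perp h_{\D, X_0}$ is smooth and, for $X_0$ ranging in the compact set $\overline{\D_\delta}$, bounded uniformly, contributing an $O(1)$ term dominated by $L^{-1}$ since $L < \delta/2$. The only genuine subtlety is ensuring the constant $C$ is uniform over $X_0 \in \D_\delta$, which follows because the distance from $X_0$ to $\partial\D$ is bounded below by $\delta$, keeping the harmonic correction and its gradient uniformly controlled. Assembling these bounds gives \eqref{new-assump} and completes the proof.
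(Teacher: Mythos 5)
Your proposal is correct and follows essentially the same route as the paper: the same decomposition of $K(x,y)$ into the singular Biot--Savart part and the harmonic corrector $\nabla^\perp h_{\D,y}$, the same use of $X_0\in\D_\delta$ and $L<\delta/2$ to control the corrector uniformly, the same scaling/change-of-variables argument giving the $O(L^{-1})$ bound on the mollified singular part, and the same choice $\varphi(r)=r^p$. The only cosmetic difference is that the paper applies H\"older's inequality to pass to the $L^p(\D,dx)$ norm before bounding pointwise in $x$, whereas you bound $\Vert K_L(\cdot,X_0)\Vert_H$ directly via the pointwise estimate; both yield the stated sufficient condition $\E(|\Gamma|^pL^{-p})<\infty$.
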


\begin{proof}
Fixing any $p>2$, by H\"older's inequality,
\begin{align*}
\int_H \|h\|_H^pP(dh)&=\E\left[\left(\int_{\D}|\Gamma K_{L}(x,X_{0})|^2dx\right)^{p/2}\right]\\
&=\E\left(|\Gamma|^2\int_{\D}\left|\int_\D K(x,y)\theta_L(y-X_0)dy\right|^2dx\right)^{p/2}\\
&\le \E\left(|\Gamma|^p|\D|^{\frac{p}{2}-1}\int_{\D}\left|\int_\D K(x,y)\theta_L(y-X_0)dy\right|^pdx\right),
\end{align*}
where recall that 
\[
K(x,y)=\frac{1}{2\pi}\frac{(x-y)^\perp}{|x-y|^2}+h_{\D,y}(x).
\]
Since $X_0\in \D_{\delta}$ and $|y-X_0|\le L<\delta/2$ \eqref{small-L}, for all $y$ contributing to the above integral, we have $y\in \D_{\delta/2}$. Therefore, the part $\nabla_x^\perp h_{\D,y}(x)$ of the kernel $K(x,y)$ is smooth as a function of $x\in \D$, for every $y\in \D_{\delta/2}$. Due to continuous dependence of $h_{\D,y}(x)$ on boundary conditions, hence on the variable $y$, the following constant is finite
\[
C(\D,\delta):=\sup_{y\in \ovl \D_{\delta/2}}\sup_{x\in \ovl\D}|\nabla_x^\perp h_{\D,y}(x)|.
\]
The contribution of $\nabla_x^\perp h_{\D,y}(x)$ to the above integral is a.s. finite:
\begin{align*}
&\left|\int_\D\nabla_x^\perp h_{\D,y}(x)\theta_L(y-X_0)dy\right|^p\\
&\le \left(\int_\D\left|\nabla_x^\perp h_{\D,y}(x)\right|\theta_L(y-X_0)dy\right)^p\\
&\le C(\D,\delta)^p\left(\int_\D\theta_L(y-X_0)dy\right)^p\le  C(\D,\delta)^p,
\end{align*}
where we used that $\int_\D\theta_L(y-X_0)dy =1$ for every realization of $X_0$.  It suffices now to focus on the other part of the kernel $(2\pi)^{-1}\frac{\left(  x-y\right)  ^{\perp}%
}{\left\vert x-y\right\vert ^{2}}$. We have that 
\begin{align*}
&\left|\int_\D\frac{\left(  x-y\right)  ^{\perp}%
}{\left\vert x-y\right\vert ^{2}}\theta_L(y-X_0)dy\right|^p\\
&\le\left(\int_\D\frac{1}{|x-y|}\theta_L(y-X_0)dy\right)^p\\
&\overset{y'=L^{-1}y}{=}  \left(\int_{L^{-1}\D}\frac{L^{-1}}{|L^{-1}x- y'|}\theta\left(y'-L^{-1}X_0\right)dy'\right)^p\\
&\le \|\theta\|_{L^\infty}^p\left(L^{-1}\int_{B(L^{-1}X_0,1)}\frac{1}{|L^{-1}x- y'|}dy'\right)^p\\
&\le \|\theta\|_{L^\infty}^p\left(L^{-1}\int_{B(L^{-1}(x-X_0),1)}\frac{1}{| y''|}dy''\right)^p\\
&\le \|\theta\|_{L^\infty}^p\left(L^{-1}\int_{B(0,1)}\frac{1}{| y''|}dy''\right)^p\\&
\le C_{p,\theta}L^{-p},
\end{align*}
where we use the fact that the integral of $|y''|^{-1}$ over a unit ball centered anywhere in $\R^2$ is maximized when the center is the origin, and the constant $C_{p,\theta}$ is nonrandom and independent of $x$. Hence, we get that 
\begin{align*}
\int_H\|h\|_H^pP(dh)\le C_{p,\theta}|\D|^{\frac{p}{2}}\E\left(|\Gamma|^pL^{-p}\right).
\end{align*}

Finally, it satisfies (\ref{assumpt zero average}):%
\[
\mathbb{E}\left[  \Gamma\int_{\D}K\left(  \cdot,y\right)  \theta_{L}\left(
y-X_{0}\right)  dy\right]  =\mathbb{E}\left[  \Gamma\right]  \mathbb{E}\left[
\int_{\D}K\left(  \cdot,y\right)  \theta_{L}\left(  y-X_{0}\right)  dy\right]
=0
\]
because the second expectation is finite and the first one is equal to zero,
by assumption.
\end{proof}

The case when $L=0$ is outside the previous definition and result. The
velocity field $K\left(  x,x_{0}\right)  $ is not of class $H$. Nevertheless
it is of class $L^{p}\left(  \D,\mathbb{R}^{2}\right)  $ for $p<2$, or of class
$H^{-s}\left(  \D,\mathbb{R}^{2}\right)  $ for $s>0$. Therefore we may consider
the random field%
\[
\Gamma K\left(  x,X_{0}\right)
\]
taking values in these spaces and call $P$ its law. We shall see below that it
satisfies certain special properties.

\subsection{Vortex filaments and the definition of $P$ in the 3D case}
\label{subsec:3D-noise}

In 3D, by vortex filament we mean a distributional vector valued field (a
``current", in the language of Calculus of Variations \cite{Giaquinta}), given
by
\[
\int_{0}^{U\wedge\tau}\delta_{X_{t}}dX_{t}%
\]
where $X_{t}$ is a function or a process such that the previous expression is
well defined. We have already introduced a possibly relevant stopping time
$\tau$ because it may help to cope with the presence of a boundary. Stochastic
currents have been introduced and investigated in some works
\cite{FlaGubGiaTor, FlaGubRusso, Capasso, BessaihCoghi}. We do not need,
strictly speaking, that theory here since we shall always deal with mollified
objects. 
In this
work we shall always assume that $\left(  X_{t}\right)  $ has the law of a
Brownian motion, but it is interesting to investigate also other processes,
for instance directed polymers, like in \cite{Lions-Majda}. 

The following construction of a vortex filament in 3D is due to \cite{FG}
(which we slightly modify). Let $(\Gamma, U, \ell)\in\mathbb{R}_{+}^{3}$ be a triple
whose joint distribution is given by some probability measure $\nu (d\gamma,
du, d\ell)$ (assumed to be a product measure for simplicity). Let $(X_{t})_{t\ge0}$ denote a 3D Brownian motion of independent components, starting from $X_0$ distributed with a probability
density $p_{0}(x)$ supported in $\D_{\delta}$, where $p_{0}%
(x)\in[p_{\text{min}},p_{\text{max}}]\subset(0,\infty)$. We call $\mathcal{W}$
its law which we assume to be independent of $\nu (\cdot, \cdot, \cdot)$. Define the first
exit time from $\D_{\delta}$ of $(X_{t})$ by
\begin{align}\label{stopping}
\tau=\tau^{\D_{\delta}}:=\inf\left\{t\ge0:\; X_{t}\in \D_{\delta}^{c}\right\}\in
[0,\infty).
\end{align}
We consider random vorticity fields defined as%
\[
\int_{0}^{U\wedge\tau}\left(  \theta\ast\delta_{X_{t}}\right)  \left(
x\right)  dX_{t}=\int_{0}^{U\wedge\tau}\theta(x-X_{t})dX_{t}.
\]
Let $A\left(  x\right)  $ be the vector potential defined path by path by the
solution of the equation%
\begin{align*}
-\Delta A\left(  x\right)   &  =\int_{0}^{U\wedge\tau}\theta(x-X_{t}%
)dX_{t} \quad \text{ in }\D\\
A|_{\partial \D} &  =0
\end{align*}
and extend $A=0$ outside of $\D$, when necessary. Then the associated velocity is
given by:%
\[
u\left(  x\right)  =\operatorname{curl}A\left(  x\right)  .
\]

Concerning Biot-Savart kernel, here we have%

\[
\psi_{\D,x_{0}}\left(  x\right)  =\frac{1}{4\pi}\frac{1}{\left\vert
x-x_{0}\right\vert }+h_{\D,x_{0}}\left(  x\right)
\]
where $h_{\D,x_{0}}$ is a smooth function, solution of the problem%
\begin{align*}
-\Delta_{x}h_{\D,x_{0}} &  =0\text{ in }\D\\
h_{\D,x_{0}}\left(  x\right)   &  =-\frac{1}{4\pi}\frac{1}{\left\vert
x-x_{0}\right\vert }\text{ for }x\in\partial \D.
\end{align*}
As usual, we shall denote $\operatorname{curl}\psi_{\D,x_{0}}\left(  x\right)
$ simply by $K\left(  x,x_{0}\right)  $, which now is vector valued and its
action on a generic vector $v$ is given by%
\begin{align}\label{kernel-3d}
K\left(  x,x_{0}\right)\times v  :=-\frac{1}{4\pi}\frac{\left(  x-x_{0}\right)  \times
v}{\left\vert x-x_{0}\right\vert ^{3}}+
\nabla_xh_{\D,x_{0}}\left(
x\right)  \times v.
\end{align}

\begin{definition}
\label{def P 3D}In the 3D case, the probability measure $P$ on the space $H$
is the law of the $H$-valued r.v.%
\begin{align}\label{def-3d}
\Gamma K_L(x, X_\cdot):=\Gamma\int_{\D}K\left(  x,y\right)\times  \left(  \int_{0}^{U\wedge\tau}\theta
_{L}\left(y-X_{t}\right)dX_{t}\right)  dy.
\end{align}
\end{definition}

\begin{remark}
We use the killed BM, not the normally reflected BM, in the definition of the
filament, because the latter is not a local martingale, only a semimartingale
due to the boundary push term, which leads to difficulties in integration
against $dX_{t}$.
\end{remark}

For future reference, the spatial covariance matrix of the vortex noise in 3D is given by 
\begin{align}\label{smoothing-2}
Q_{\text{vortex}}(x,x')=\E\left[\Gamma K_L(x,X_\cdot) \otimes \Gamma K_L(x',X_\cdot) \right], \quad x,x'\in\D.
\end{align}

\begin{proposition}
The random vector field of Definition \ref{def P 2D} takes values in $H$. If%
\[
\E\left(  |\Gamma |^{p}U^{\frac{p}{2}}L^{-2p}\right)  <\infty,
\]
for some $p>2$, then it satisfies (\ref{new-assump})-(\ref{sup-quadr}).
Moreover, it satisfies (\ref{assumpt zero average}).
\end{proposition}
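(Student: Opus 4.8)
The plan is to verify the three assertions—values in $H$, the super-quadratic integrability \eqref{new-assump}–\eqref{sup-quadr}, and zero average \eqref{assumpt zero average}—for the field \eqref{def-3d}, the moment bound being the substantial part. Throughout write $u(x):=K_L(x,X_\cdot)$ for the unnormalized velocity field, so that the field of interest is $\Gamma u$. The key preliminary step is to push the spatial integral inside the stochastic integral by \emph{stochastic Fubini}: since the cross product is linear in its second argument,
\begin{align*}
u(x)=\int_\D K(x,y)\times\Big(\int_0^{U\wedge\tau}\theta_L(y-X_t)\,dX_t\Big)dy=\int_0^{U\wedge\tau}\bar K_L(x,X_t)\times dX_t,
\end{align*}
where $\bar K_L(x,z):=\int_\D K(x,y)\,\theta_L(y-z)\,dy$ is the mollified Biot--Savart kernel. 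Zero average is then immediate: $\Gamma$ is independent of $(X_\cdot,U,L,\tau)$ and $\E[\Gamma]=0$, so $\E[\Gamma u(\cdot)]=\E[\Gamma]\,\E[u(\cdot)]=0$ once $\E\|u\|_H<\infty$ (from the moment bound); equivalently, the stopped stochastic integral is a mean-zero martingale so $\E[u(\cdot)]=0$ already. For $H$-membership I would use the curl structure: $u=\operatorname{curl}A$ with $-\Delta A=\omega$, $A|_{\partial\D}=0$, and $\omega$ the mollified filament vorticity, whence $\operatorname{div}u=0$ distributionally and the vanishing tangential trace $A_{\tan}|_{\partial\D}=0$ forces $u\cdot n|_{\partial\D}=0$; together with $u\in L^2$ from the moment bound this places $u$ in $H$ path by path.

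For the moment estimate I would bound $\int_H\|h\|_H^pP(dh)=\E[|\Gamma|^p\|u\|_H^p]$ in three moves. First, Minkowski's integral inequality (valid since $p/2\ge1$) gives
\begin{align*}
\E\big[\|u\|_H^p\big]=\Big\|\int_\D|u(x)|^2\,dx\Big\|_{L^{p/2}(\Omega)}^{p/2}\le\Big(\int_\D\big(\E[|u(x)|^p]\big)^{2/p}\,dx\Big)^{p/2}.
\end{align*}
Second, for fixed $x$ I would apply the Burkholder--Davis--Gundy inequality to the stopped stochastic integral above; since $X$ has independent components with $d\langle X\rangle_t=I\,dt$, the quadratic variation of $u(x)$ is comparable to $\int_0^{U\wedge\tau}|\bar K_L(x,X_t)|^2\,dt$, so conditionally on $(U,L)$,
\begin{align*}
\E\big[|u(x)|^p\mid U,L\big]\le C_p\,\E\Big[\Big(\int_0^{U\wedge\tau}|\bar K_L(x,X_t)|^2\,dt\Big)^{p/2}\,\Big|\,U,L\Big].
\end{align*}

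Third, I bound the integrand pathwise, mirroring the 2D proposition but with the three-dimensional scaling. Splitting $K$ as in \eqref{kernel-3d}, the smooth part $\nabla_x h_{\D,y}$ contributes at most $C(\D,\delta)$, because by \eqref{small-L} and $X_t\in\D_\delta$ for $t<\tau$ the mollifier $\theta_L(\cdot-X_t)$ is supported in $\ovl\D_{\delta/2}$; for the singular part the rescaling $y=Lz$ together with the fact that $\int_{B(c,1)}|w|^{-2}\,dw$ (finite in $\R^3$, equal to $4\pi$) is maximized at $c=0$ yields $|\bar K_L(x,X_t)|\le C(1+L^{-2})\lesssim L^{-2}$, uniformly in $x\in\D$ and $t\le\tau$. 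Hence $\int_0^{U\wedge\tau}|\bar K_L(x,X_t)|^2\,dt\lesssim U\,L^{-4}$, giving $\E[|u(x)|^p\mid U,L]\lesssim_p U^{p/2}L^{-2p}$ uniformly in $x$. Restoring $\Gamma$ by independence, $\E[|\Gamma u(x)|^p]\lesssim_p\E[|\Gamma|^pU^{p/2}L^{-2p}]$, and feeding this into the Minkowski bound yields $\int_H\|h\|_H^pP(dh)\lesssim_p|\D|^{p/2}\,\E[|\Gamma|^pU^{p/2}L^{-2p}]<\infty$. Taking $\varphi(r)=r^p$ with $p>2$ then verifies \eqref{new-assump}–\eqref{sup-quadr}.

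The main obstacle I anticipate is the bookkeeping producing exactly the weights $U^{p/2}L^{-2p}$ of the hypothesis: the factor $L^{-2}$ per kernel evaluation is the genuinely three-dimensional feature (the singularity is $|x-y|^{-2}$, not $|x-y|^{-1}$ as in 2D), while the factor $U^{p/2}$ is the characteristic output of applying BDG to a stochastic integral run up to time $U\wedge\tau\le U$. One must also justify the stochastic Fubini interchange rigorously, which follows from the same pathwise kernel bounds guaranteeing the a.s.\ and $L^2(\Omega)$ integrability of $(t,y)\mapsto K(x,y)\theta_L(y-X_t)$; and one should note that the BDG constant $C_p$ depends only on $p$, so the uniformity in $x$ is entirely inherited from the uniform kernel bound $|\bar K_L(x,\cdot)|\lesssim L^{-2}$.
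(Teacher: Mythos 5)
Your proposal is correct and follows essentially the same route as the paper: condition on $(\Gamma,U,L)$, apply Burkholder--Davis--Gundy to the stopped stochastic integral, split the kernel \eqref{kernel-3d} into the smooth part (bounded by $C(\D,\delta)$ via \eqref{small-L}) and the singular part (rescaled by $y'=L^{-1}y$ with the ball-centered-at-origin maximization), yielding the uniform-in-$x$ bound that produces exactly $\E\left(|\Gamma|^p U^{p/2} L^{-2p}\right)$. The only differences are cosmetic: you use Minkowski's integral inequality where the paper uses H\"older with the factor $|\D|^{p/2-1}$, and you bound $|\bar K_L|\lesssim L^{-2}$ pathwise before applying BDG rather than carrying the $p$-th power through the change of variables, with identical effect.
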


\begin{proof}
Fix any $p>2$, we compute
\begin{align*}
\int_{H} \|h\|_H^{p}P(dh)&=\mathbb{E}\left[  \left(  \int_{\D}|\Gamma %
u(x)|^{2}dx\right)  ^{p/2}\right].
\end{align*}
Fixing any realization of $(\Gamma , U,L)$ according to measure $\nu $, we take expectation with respect to the Wiener measure $\mathcal{W}$ first. We compute, by  H\"older's inequality and $p/2>1$, and Burkholder–Davis–Gundy inequality
\begin{align*}
&\mathcal{W}\left[\left(  \int_{\D}|u(x)|^{2}dx\right)  ^{p/2}\right]\\
&=  \mathcal{W}\left[  \left(  \int_{\D}\left|  \int_{0}^{U\wedge \tau}\int_{\D_\delta}%
K(x,y)\theta_{L}\left(y-X_{t}\right)dy\times dX_{t}\right|  ^{2}dx\right)
^{p/2}\right] \\
&  \le|\D|^{\frac{p}{2}-1}\mathcal{W}\left( \int_{\D}\left|  \int_{0}^{U\wedge \tau}\int%
_{\D_\delta}K(x,y)\theta_{L}\left(y-X_{t}\right)dy\times dX_{t}\right|  ^{p}dx\right) \\
&  =|\D|^{\frac{p}{2}-1}\int_{\D} \mathcal{W}\left(  \left|  \int_{0}^{U\wedge \tau}\int%
_{\D_\delta}K(x,y)\theta_{L}\left(y-X_{t}\right)dy\times dX_{t}\right|  ^{p}\right)dx\\
&  \le |\D|^{\frac{p}{2}-1}\int_{\D}\mathcal{W}\left(  \left|  \int_{0}^{U\wedge \tau} 2\left|
\int_{\D_\delta}K(x,y)\theta_{L}\left(y-X_{t}\right)dy\right|  ^{2}dt\right|  ^{p/2}\right) dx .
\end{align*}
Since $X_{t\wedge\tau}\in \D_{\delta}$ \eqref{stopping}, we have that any $y$ that contributes to the above integral is supported in $y\in \D_{\delta/2}$, hence $\nabla_{x}h_{\D,y}(x)$ part of the kernel $K(x,y)$ is uniformly bounded, i.e. 
\[
\sup_{y\in \ovl \D_{\delta/2}}\sup_{x\in \ovl \D}|\nabla_{x}h_{\D,y}(x)|\le C(\D, \delta).
\]
Hence its contribution in the above integral can be computed, as for any $x\in \D$,
\begin{align*}
&\mathcal{W}\left(  \left|  \int_{0}^{U\wedge \tau}\left|
\int_{\D}\nabla_xh_{\D,y}(x)\theta_{L}(y-X_{t})dy\right|  ^{2}dt\right|  ^{p/2}\right)\\
&\le U^{\frac{p}{2}-1}\int_{0}^{U} \mathcal{W}\left(  \left|  \int_{\D}\nabla_xh_{\D,y}(x)\theta_{L}(y-X_{t})dy\right|  ^{p}1_{\{t\le\tau\}}\right) dt\\
&\le U^{\frac{p}{2}-1}C(\D,\delta)^p\int_{0}^{U} \mathcal{W}\left( \left(  \int_{\D}\theta_{L}(y-X_{t})dy\right)^{p}1_{\{t\le \tau\}}\right)dt \\
&\le U^{\frac{p}{2}}C(\D,\delta)^p
\end{align*}
using that $\int \theta_L(y-X_t)dy=1$ for every possible realization of $X_{t\wedge\tau}\in \D_{\delta}$.

It suffices to focus on the other part of the kernel $(4\pi)^{-1}\frac{x-y}{|x-y|^{3}}$. We can do an explicit calculation: by H\"older's inequality and then a change of variables, we have that for any $x\in \D$,

\begin{align*}
&  \mathcal{W}\left( \left|  \int_{0}^{U\wedge \tau}\left|  \int_{\D}\frac{x-y}%
{|x-y|^{3}}\theta_{L}(y-X_{t})dy\right|  ^{2}dt\right|  ^{p/2}\right)\\
&\le U^{\frac{p}{2}-1}\int_{0}^{U} \mathcal{W}\left(  \left|  \int_{\D}\frac{x-y}{|x-y|^{3}%
}\theta_{L}(y-X_{t})dy\right|  ^{p}1_{\{t\le \tau\}}\right)dt \\
& \overset{y^{\prime}=L^{-1}y}{=} \quad   U^{\frac{p}{2}-1}\int_{0}^{U} \mathcal{W}\left(
\left|  \int_{L^{-1}\D}\frac{L^{-2}(L^{-1}x-y^{\prime})}{|L^{-1}x-y^{\prime}|^3%
}\theta\left(y'-L^{-1}X_{t}\right)dy^{\prime}\right|  ^{p}1_{\{t\le \tau\}}\right)dt \\
&  \le U^{\frac{p}{2}-1}L^{-2p} \left\|\theta\right\|_{L^\infty}^p\int_{0}^{U} \mathcal{W}\left(
\left|  \int_{B(L^{-1}X_{t},1)}\frac{1}{|L^{-1}x-y^{\prime}|^2}dy^{\prime
}\right|  ^{p}1_{\{t\le \tau\}}\right)dt\\
& =U^{\frac{p}{2}-1}L^{-2p} \left\|\theta\right\|_{L^\infty}^p\int_{0}^{U} \mathcal{W}\left(
\left|  \int_{B\left(L^{-1}(x-X_{t}),1\right)}\frac{1}{|y^{''}|^2}dy^{''}\right|  ^{p}\right)dt \\
&\le U^{\frac{p}{2}}L^{-2p} \left\|\theta\right\|_{L^\infty}^p\; \mathcal{W}\left(
\left|  \int_{B\left(0,1\right)}\frac{1}{|y^{''}|^2}dy^{''}\right|  ^{p}\right) \\
& \le C_{p, \theta}U^{\frac{p}{2}}L^{-2p},
\end{align*}
where $C_{p,\theta}$ is a non-random constant independent of $x$. Indeed, we used the geometric fact that the integral of the function $|y^{''}|^{-2}$ over a unit ball centered at anywhere in $\R^3$, is maximized when the center is the origin. 

Thus, we can conclude that
\begin{align*}
\mathbb{E}\left[  \left(  \int_{\D}|\Gamma u(x)|^{2}dx\right)
^{p/2}\right]   &  \le C_{p,\theta}|\D|^{\frac{p}{2}}\E \left( |\Gamma |^{p}U^{\frac{p}{2}} L^{-2p} \right)  
\end{align*}
with the finiteness of the RHS providing a sufficient condition.

Finally, it satisfies (\ref{assumpt zero average}):%
\begin{align*}
& \mathbb{E}\left[  \Gamma\int_{\D}K\left(  \cdot,y\right) \times \left(  \int%
_{0}^{U\wedge\tau}\theta_{L}(y-X_{t})dX_{t}\right)  dy\right]  \\
& =\mathbb{E}\left[  \Gamma\right]  \mathbb{E}\left[  \int_{\D}K\left(
\cdot,y\right)  \times\left(  \int_{0}^{U\wedge\tau}\theta_{L}(y-X_{t}%
)dX_{t}\right)  dy\right]  =0
\end{align*}
because the second expectation is finite and the first one is equal to zero,
by assumption.
\end{proof}

\section{Vortex noises reproduce Fractional Gaussian Fields and Kraichnan noise}\label{sec:covariance}
In this section, we analyse the covariance operators of our vortex noises constructed above in 2D and 3D, and show that our vortex noises are instances of Fractional Gaussian Fields (\cite{Lodhia}, which is a broad class of Gaussian generalized random fields that includes Gaussian Free Field and Kraichnan noise). We show that, by choosing the statistical parameters of our model suitably, we can reproduce a large class of FGF. It may also reproduce multifractal vector fields, which was the main motivation of study in  \cite{FG}.

For simplicity, our fields are defined on the torus $\T^d$, $d=2,3$. 

In the scalar case and on the torus $\T^d=\mathbb{R}^{d}/\mathbb{Z}^{d}$, the
classical $d$-dimensional FGF of index $s\in\mathbb{R}$ is the Gaussian field
with covariance $\left(  -\Delta\right)  ^{-s}$, where $\Delta$ is the
Laplacian in on $\mathbb{T}^{d}$ (see \cite{Lodhia}). The case $s=1$ is called Gaussian
Free Field (GFF). Similarly let us introduce a Gaussian measure on solenoidal
vector fields. Let $H$ be the space of mean zero periodic $L^{2}$ solenoidal
vector fields. The Stokes operator is defined as%
\begin{align*}
A  &  :D\left(  A\right)  \subset H\rightarrow H\\
D\left(  A\right)   &  =\mathsf H^{2}\left(  \mathbb{T}^{d},\mathbb{R}^{d}\right) \\
Av  &  =\Delta v
\end{align*}
(no projection of $L^{2}\left(  \mathbb{T}^{d},\mathbb{R}^{d}\right)  $ to $H$
is needed here, opposite to the case of a bounded domain with Dirichlet
boundary conditions). The Laplacian $\Delta v$ is computed componentwise. The
operator $A$ is invertible in $H$ (see \cite{Temam1}). With these definitions
at hand, we call Solenoidal Fractional Gaussian Field (SFGF) of index
$s\in\mathbb{R}$ the Gaussian measure with covariance $\left(  -A\right)
^{-s}$. The case $s=1$ will be called Solenoidal Gaussian Free Field (SGFF).

\subsection{Covariance of 2D vortex noise}
Let us first consider the 2D case, and recall the definition of the noise
based on point vortices \eqref{def-2d}.

The covariance operator of our noise is given by%
\[
\left\langle \mathbb{Q}v,w\right\rangle =\mathbb{E}\left[  \Gamma^{2}%
\int_{\mathbb{T}^{2}}K_{L}\left(  x,X_{0}\right)  \cdot v\left(  x\right)
dx\int_{\mathbb{T}^{2}}K_{L}\left(  x^{\prime},X_{0}\right)  \cdot w\left(
x^{\prime}\right)  dx^{\prime}\right]  .
\]
Call $Q_{\text{vortex}}\left(  x,x^{\prime}\right)  $ its covariance function
(matrix-valued), such that
\[
\left\langle \mathbb{Q}v,w\right\rangle =\int_{\mathbb{T}^{2}}\int%
_{\mathbb{T}^{2}}v\left(  x\right)  ^{T}Q_{\text{vortex}}\left(  x,x^{\prime
}\right)  w\left(  x^{\prime}\right)  dxdx^{\prime}.
\]
It is clear (and proved below) that it is homogeneous:%
\[
Q_{\text{vortex}}\left(  x,x^{\prime}\right)  =Q_{\text{vortex}}\left(
x-x^{\prime}\right)
\]
for a matrix function $Q_{\text{vortex}}\left(  x\right)  $. In the sequel we
denote by $\mathbb{Z}_{0}^{d}$ the set $\mathbb{Z}^{d}\backslash\left\{
0\right\}  $.

\begin{proposition}
Assume $\theta$ symmetric, and $X_{0}$ independent of $\left(  \Gamma
,L\right)  $ and uniformly distributed. Then%
\begin{equation}
Q_{\text{vortex}}\left(  x\right)  =\sum_{\mathbf{k\in}\mathbb{Z}_{0}^{2}%
}\mathbb{E}\left[  \Gamma^{2}\left\vert \widehat{\theta}\left(  L\mathbf{k}%
\right)  \right\vert ^{2}\right]  \frac{1}{\left\vert \mathbf{k}\right\vert
^{2}}P_{\mathbf{k}}e^{i\mathbf{k}\cdot x}.\label{cov-2d}%
\end{equation}

\end{proposition}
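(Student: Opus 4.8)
The plan is to pass to Fourier series on $\T^2$ and exploit the homogeneity of the torus together with the uniform law of $X_0$; throughout I adopt the normalization in which the Laplacian eigenpairs are $(e^{i\mathbf{k}\cdot x},-|\mathbf{k}|^2)$ and $\widehat{\delta_0}\equiv 1$, matching the form of \eqref{cov-2d}. First I would use translation invariance of the Biot--Savart kernel, $K(x,y)=K(x-y)$, to write the mollified field as a shifted convolution, $K_L(x,X_0)=\int_{\T^2}K(x-y)\theta_L(y-X_0)\,dy=(K\ast\theta_L)(x-X_0)$. Writing the stream-function relation $-\Delta\psi=\delta_0-1$ in Fourier (the $\mathbf{k}=0$ mode being removed by the zero-mean constraint) gives $\widehat\psi(\mathbf{k})=|\mathbf{k}|^{-2}$ for $\mathbf{k}\neq0$, and since $u=\nabla^\perp\psi$ acts as multiplication by $i\mathbf{k}^\perp$ while $\widehat{\theta_L}(\mathbf{k})=\widehat\theta(L\mathbf{k})$, the smoothed kernel has coefficients
\[
\widehat{K_L}(\mathbf{k})=i\,\mathbf{k}^\perp\,\frac{\widehat\theta(L\mathbf{k})}{|\mathbf{k}|^2},\qquad \mathbf{k}\in\mathbb{Z}_0^2 .
\]
The absence of a $\mathbf{k}=0$ coefficient (a torus point vortex has zero spatial mean) is precisely why the final sum runs over $\mathbb{Z}_0^2$.

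Second, I would substitute these expansions into $Q_{\text{vortex}}(x,x')=\E[\Gamma^2\,K_L(x,X_0)\otimes K_L(x',X_0)]$ and integrate over $X_0$ first. Since $X_0$ is uniform and independent of $(\Gamma,L)$, character orthogonality $\E_{X_0}[e^{-i(\mathbf{k}+\mathbf{k}')\cdot X_0}]=\mathbf{1}\{\mathbf{k}'=-\mathbf{k}\}$ collapses the double sum onto the diagonal $\mathbf{k}'=-\mathbf{k}$, leaving
\[
Q_{\text{vortex}}(x,x')=\sum_{\mathbf{k}\in\mathbb{Z}_0^2}\E\!\left[\Gamma^2\,\widehat{K_L}(\mathbf{k})\otimes\widehat{K_L}(-\mathbf{k})\right]e^{i\mathbf{k}\cdot(x-x')},
\]
which in particular exhibits the claimed homogeneity $Q_{\text{vortex}}(x,x')=Q_{\text{vortex}}(x-x')$.

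Third, I would evaluate the matrix $\widehat{K_L}(\mathbf{k})\otimes\widehat{K_L}(-\mathbf{k})$. Symmetry of $\theta$ makes $\widehat\theta$ real and even, so $\widehat\theta(-L\mathbf{k})=\widehat\theta(L\mathbf{k})$; combined with $(-\mathbf{k})^\perp=-\mathbf{k}^\perp$ the two factors of $i$ multiply to $+1$, giving
\[
\widehat{K_L}(\mathbf{k})\otimes\widehat{K_L}(-\mathbf{k})=\frac{|\widehat\theta(L\mathbf{k})|^2}{|\mathbf{k}|^4}\,\mathbf{k}^\perp\otimes\mathbf{k}^\perp=\frac{|\widehat\theta(L\mathbf{k})|^2}{|\mathbf{k}|^2}\,P_{\mathbf{k}},
\]
where I use $\mathbf{k}^\perp\otimes\mathbf{k}^\perp=|\mathbf{k}|^2 P_{\mathbf{k}}$ with $P_{\mathbf{k}}=I-\mathbf{k}\otimes\mathbf{k}/|\mathbf{k}|^2$ the projection onto $\{\mathbf{k}\}^\perp$. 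Inserting this into the diagonal sum and pulling the deterministic, nonnegative factor $|\mathbf{k}|^{-2}P_{\mathbf{k}}$ out of the expectation over $(\Gamma,L)$ yields \eqref{cov-2d}.

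The individual steps are short; the part that needs genuine care is justifying the Fourier manipulations — interchanging the expectation with the (only conditionally convergent) Fourier series of the singular kernel $K$, and the sum--integral swap in the convolution. The mollifier does the essential work: the key quantitative input is that $\sum_{\mathbf{k}\in\mathbb{Z}_0^2}\E[\Gamma^2|\widehat\theta(L\mathbf{k})|^2]\,|\mathbf{k}|^{-2}$ is finite, which is nothing but $\operatorname{tr}Q_{\text{vortex}}(0)$ and is controlled by the same moment hypotheses that already place the noise in $H$. Because $|e^{i\mathbf{k}\cdot x}|=1$ and $\|P_{\mathbf{k}}\|_{\mathrm{tr}}=1$, this finiteness makes the matrix-valued series converge absolutely in trace norm, uniformly in $x$, and thereby legitimizes the applications of Fubini--Tonelli and dominated convergence used in the exchanges above.
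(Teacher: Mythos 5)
Your proof is correct and follows essentially the same route as the paper: both reduce the claim to the Fourier multiplier $\widehat{K}(\mathbf{k})=i\mathbf{k}^{\perp}/|\mathbf{k}|^{2}$, use the uniform law of $X_{0}$ (you via character orthogonality on the kernel $Q_{\text{vortex}}(x,x')$ directly, the paper via Parseval applied to the quadratic form $\langle\mathbb{Q}v,w\rangle$), and identify $\mathbf{k}^{\perp}\otimes\mathbf{k}^{\perp}/|\mathbf{k}|^{2}=P_{\mathbf{k}}$ together with $\widehat{\theta_{L}}(\mathbf{k})=\widehat{\theta}(L\mathbf{k})$. The only differences are presentational, and your closing remarks on absolute convergence of the trace-norm series are a correct (and slightly more careful) justification of the interchanges that the paper performs implicitly.
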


\begin{proof}
We may rewrite
\begin{align*}
\int_{\mathbb{T}^{2}}K_{L}\left(  x,X_{0}\right)  \cdot v\left(  x\right)  dx
&  =\int_{\mathbb{T}^{2}}\int_{\mathbb{T}^{2}}K\left(  x,y\right)  \cdot
v\left(  x\right)  \theta_{L}\left(  y-X_{0}\right)  dydx\\
&  =\left(  \theta_{L}\ast K\ast v\right)  \left(  X_{0}\right)
\end{align*}
Therefore%
\begin{align*}
\left\langle \mathbb{Q}v,w\right\rangle  &  =\mathbb{E}\left[  \Gamma
^{2}\left(  \theta_{L}\ast K\ast v\right)  \left(  X_{0}\right)  \left(
\theta_{L}\ast K\ast w\right)  \left(  X_{0}\right)  \right]  \\
&  =\mathbb{E}\left[  \Gamma^{2}\int_{\mathbb{T}^{2}}\left(  \theta_{L}\ast
K\ast v\right)  \left(  x\right)  \left(  \theta_{L}\ast K\ast w\right)
\left(  x\right)  dx\right]  .
\end{align*}
By Parseval theorem%
\begin{align*}
\left\langle \mathbb{Q}v,w\right\rangle  &  =\mathbb{E}\left[  \Gamma^{2}%
\sum_{\mathbf{k}}\widehat{\theta_{L}\ast K\ast v}\left(  \mathbf{k}\right)
\overline{\widehat{\theta_{L}\ast K\ast w}\left(  \mathbf{k}\right)  }\right]
\\
&  =\sum_{\mathbf{k\in}\mathbb{Z}_{0}^{2}}\mathbb{E}\left[  \Gamma
^{2}\left\vert \widehat{\theta_{L}^{T}}\left(  \mathbf{k}\right)  \right\vert
^{2}\right]  \frac{1}{\left\vert \mathbf{k}\right\vert ^{2}}\left\langle
P_{\mathbf{k}}\widehat{v}\left(  \mathbf{k}\right)  ,\overline{\widehat{w}%
\left(  \mathbf{k}\right)  }\right\rangle
\end{align*}
recalling that
\[
\widehat{K}\left(  \mathbf{k}\right)  =i\frac{\mathbf{k}^{\perp}}{\left\vert
\mathbf{k}\right\vert ^{2}}%
\]
and calling $P_{\mathbf{k}}=I-\frac{\mathbf{k}\otimes\mathbf{k}}%
{|\mathbf{k}|^{2}}$ is the projection on the orthogonal to $\mathbf{k}$.
Therefore%
\[
Q_{\text{vortex}}\left(  x\right)  =\sum_{\mathbf{k\in}\mathbb{Z}_{0}^{2}%
}\mathbb{E}\left[  \Gamma^{2}\left\vert \widehat{\theta_{L}}\left(
\mathbf{k}\right)  \right\vert ^{2}\right]  \frac{1}{\left\vert \mathbf{k}%
\right\vert ^{2}}P_{\mathbf{k}}e^{i\mathbf{k}\cdot x}.
\]
Since $\widehat{\theta_{\ell}}\left(  \mathbf{k}\right)  =\widehat{\theta
}\left(  \ell\mathbf{k}\right)  $, we get the result.
\end{proof}

\begin{corollary}\label{cor:fgf}
In addition, assume $\theta$ is a smooth function with $\widehat{\theta
}\left(  \mathbf{k}\right)  =\widehat{\theta}\left(  \left\vert \mathbf{k}%
\right\vert \right)  $, let $f_{L}$ be the probability density of $L$ and
assume $\Gamma$ is a function of $L$: $\Gamma=\gamma\left(  L\right)  $.
Assume
\[
\gamma^{2}\left(  r\right)  f_{L}\left(  r\right)  =Cr^{\alpha}%
\]
for some $C>0$ and%
\[
\alpha>-1.
\]
Call
\[
D:=\int_{0}^{\infty}\left\vert \widehat{\theta}\left(  r\right)  \right\vert
^{2}\gamma^{2}\left(  r\right)  f_{L}\left(  r\right)  dr
\]
which is a finite constant. Then%
\[
Q_{\text{vortex}}\left(  x\right)  =D\sum_{\mathbf{k\in}\mathbb{Z}_{0}^{2}%
}\frac{1}{\left\vert \mathbf{k}\right\vert ^{3+\alpha}}P_{\mathbf{k}%
}e^{i\mathbf{k}\cdot x}.
\]
This is the covariance function of a SFGF of index%
\[
s=\frac{3+\alpha}{2}.
\]

\end{corollary}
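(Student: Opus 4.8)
The plan is to start from the Fourier representation of $Q_{\text{vortex}}$ already established in the preceding Proposition, namely
\[
Q_{\text{vortex}}(x)=\sum_{\mathbf{k}\in\mathbb{Z}_0^2}\mathbb{E}\left[\Gamma^2|\widehat{\theta}(L\mathbf{k})|^2\right]\frac{1}{|\mathbf{k}|^2}P_{\mathbf{k}}e^{i\mathbf{k}\cdot x},
\]
and to show that the extra hypotheses collapse each coefficient $\mathbb{E}[\Gamma^2|\widehat\theta(L\mathbf{k})|^2]$ into an exact power $D|\mathbf{k}|^{-(\alpha+1)}$. The whole argument is thus the evaluation of a single one-dimensional integral, mode by mode, so I do not expect any step to be genuinely hard.

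First I would use the two structural assumptions to rewrite the coefficient as an integral against $f_L$. Since $\Gamma=\gamma(L)$ and $\widehat\theta$ is radial, $\Gamma^2|\widehat\theta(L\mathbf{k})|^2=\gamma^2(L)\,|\widehat\theta(L|\mathbf{k}|)|^2$ is a function of $L$ alone, so
\[
\mathbb{E}\left[\Gamma^2|\widehat\theta(L\mathbf{k})|^2\right]=\int_0^\infty\gamma^2(r)\,|\widehat\theta(r|\mathbf{k}|)|^2\,f_L(r)\,dr=C\int_0^\infty r^\alpha|\widehat\theta(r|\mathbf{k}|)|^2\,dr,
\]
where the last equality inserts $\gamma^2(r)f_L(r)=Cr^\alpha$.

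The decisive step is the scaling substitution $u=r|\mathbf{k}|$, which converts the $\mathbf{k}$-dependence into a clean power. Under it the integral becomes $C|\mathbf{k}|^{-(\alpha+1)}\int_0^\infty u^\alpha|\widehat\theta(u)|^2\,du$, with the remaining integral independent of $\mathbf{k}$; recognizing it, via the same identity $\gamma^2f_L=Cr^\alpha$, as exactly $D/C$, I obtain $\mathbb{E}[\Gamma^2|\widehat\theta(L\mathbf{k})|^2]=D|\mathbf{k}|^{-(\alpha+1)}$. Substituting back yields
\[
Q_{\text{vortex}}(x)=D\sum_{\mathbf{k}\in\mathbb{Z}_0^2}\frac{1}{|\mathbf{k}|^{3+\alpha}}P_{\mathbf{k}}e^{i\mathbf{k}\cdot x},
\]
and since the solenoidally projected Fourier multiplier of $(-A)^{-s}$ on mode $\mathbf{k}$ is $|\mathbf{k}|^{-2s}P_{\mathbf{k}}$, matching $2s=3+\alpha$ identifies the field (up to the scalar $D$) as the SFGF of index $s=(3+\alpha)/2$.

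The one genuine point to verify — and where the hypothesis $\alpha>-1$ enters — is finiteness of $D$, i.e. convergence of $\int_0^\infty r^\alpha|\widehat\theta(r)|^2\,dr$. Near $r=0$ one has $\widehat\theta(r)\to\int\theta=1$, so the integrand behaves like $r^\alpha$, integrable at the origin precisely when $\alpha>-1$; near $r=\infty$, since $\theta\in C_c^\infty$ its transform $\widehat\theta$ decays faster than any polynomial, so the tail converges for every $\alpha$. The only subtlety I would flag is that the clean scaling in the decisive step requires $\gamma^2f_L=Cr^\alpha$, hence the support of $f_L$, to be all of $(0,\infty)$, so that $u=r|\mathbf{k}|$ maps the domain of integration onto itself; for a compactly supported $L$ the upper limit would acquire a $\mathbf{k}$-dependence and the exact power law $|\mathbf{k}|^{-(3+\alpha)}$ would emerge only asymptotically in $|\mathbf{k}|$. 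All the content therefore sits in this scaling identity together with the integrability threshold $\alpha>-1$.
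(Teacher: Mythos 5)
Your proof is correct and follows essentially the same route as the paper's: insert the power-law identity $\gamma^2 f_L = Cr^\alpha$ into the mode-by-mode coefficient $\mathbb{E}[\Gamma^2|\widehat\theta(L\mathbf{k})|^2]$, rescale by $|\mathbf{k}|$ to extract the factor $|\mathbf{k}|^{-1-\alpha}D$, and check integrability of $D$ at $0$ (using $\alpha>-1$) and at $\infty$ (using the fast decay of $\widehat\theta$). Your closing remark that the exact power law requires $f_L$ to be supported on all of $(0,\infty)$ is a fair observation the paper leaves implicit, and it is precisely the issue addressed by the cut-off variant in Corollary \ref{cor:fractal}.
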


\begin{proof}
Since $\theta$ is smooth, $\widehat{\theta}\left(  r\right)  $ has a fast
decay which makes $\widehat{\theta}\left(  r\right)  r^{\alpha}$ integrable at
infinity for every $\alpha$; and it is also integrable at zero because
$\alpha>-1$. From the assumptions,%
\begin{align*}
\mathbb{E}\left[  \Gamma^{2}\left\vert \widehat{\theta}\left(  L\mathbf{k}%
\right)  \right\vert ^{2}\right]    & =\mathbb{E}\left[  \gamma^{2}\left(
L\right)  \left\vert \widehat{\theta}\left(  \left\vert L\mathbf{k}\right\vert
\right)  \right\vert ^{2}\right]  \\
& =\int_{0}^{\infty}\gamma^{2}\left(  \ell\right)  \left\vert \widehat{\theta
}\left(  \ell\left\vert \mathbf{k}\right\vert \right)  \right\vert ^{2}%
f_{L}\left(  \ell\right)  d\ell\\
& =\left\vert \mathbf{k}\right\vert ^{-1}\int_{0}^{\infty}\left\vert
\widehat{\theta}\left(  r\right)  \right\vert ^{2}\gamma^{2}\left(  \left\vert
\mathbf{k}\right\vert ^{-1}r\right)  f_{L}\left(  \left\vert \mathbf{k}%
\right\vert ^{-1}r\right)  dr\\
& =\left\vert \mathbf{k}\right\vert ^{-1-\alpha}D.
\end{align*}

\end{proof}

Notice that $\alpha>-1$ corresponds to%
\[
s>1
\]
so the SGFF ($s=1$) is a (just excluded) limit case.

Recall that the solenoidal Kraichnan model with scaling parameter $\zeta$ is
defined, on the torus $\mathbb{T}^{d}$, by the covariance function
\[
Q_{\text{Kraichnan}}\left(  x\right)  =D\sum_{\mathbf{k\in}\mathbb{Z}_{0}^{d}%
}\frac{1}{\left\vert \mathbf{k}\right\vert ^{d+\zeta}}P_{\mathbf{k}%
}e^{i\mathbf{k}\cdot x}.
\]
We see thus that the vortex noise, in dimension $d=2$ (see next section for
$d=3$), covers Kraichnan model with scaling parameter
\[
\zeta=1+\alpha>0
\]
(any positive $\zeta$ is covered). 

The space-scale $\ell$ of the vortices is free in the previous results. If we
restrict ourselves to small vortices, namely we take $f_{L}\left(  r\right)
=0$ for $r>k_{0}^{-1}$ we get:

\begin{corollary}\label{cor:fractal}
Under the same assumptions of the previous corollary except for%
\[
\gamma^{2}\left(  r\right)  f_{L}\left(  r\right)  =Cr^{\alpha}1_{\left\{
r\leq k_{0}^{-1}\right\}  }%
\]
for some $C,k_{0}>0$ and $\alpha>-1$, we get
\[
Q_{\text{vortex}}\left(  x\right)  =\frac{1}{k_{0}^{3+\alpha}}\sum_{\left\vert
\mathbf{k}\right\vert >k_{0}}\frac{D\left(  \left\vert \mathbf{k}\right\vert
/k_{0}\right)  }{\left(  \left\vert \mathbf{k}\right\vert /k_{0}\right)
^{3+\alpha}}P_{\mathbf{k}}e^{i\mathbf{k}\cdot x}+R_{k_{0}}\left(  x\right)
\]
where%
\[
\lim_{\kappa\rightarrow\infty}D\left(  \kappa\right)  =D
\]%
\[
\left\Vert R_{k_{0}}\left(  x\right)  \right\Vert \leq\frac{C^{\prime}}%
{\alpha+1}\frac{\log k_{0}}{k_{0}^{1+\alpha}}%
\]
for some constant $C^{\prime}>0$. 
\end{corollary}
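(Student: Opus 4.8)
The plan is to start from the closed-form mode expansion \eqref{cov-2d} and simply track how the truncation of $f_L$ propagates through the sum, reusing the computation in the proof of Corollary \ref{cor:fgf} almost verbatim. Writing $\Gamma=\gamma(L)$ and inserting the truncated weight $\gamma^2(\ell)f_L(\ell)=C\ell^\alpha 1_{\{\ell\le k_0^{-1}\}}$, the $\mathbf{k}$-th coefficient in \eqref{cov-2d} is $\mathbb{E}[\Gamma^2|\widehat\theta(L\mathbf{k})|^2]=C\int_0^{k_0^{-1}}|\widehat\theta(\ell|\mathbf{k}|)|^2\ell^\alpha\,d\ell$. The substitution $r=\ell|\mathbf{k}|$ used in Corollary \ref{cor:fgf} turns this into $|\mathbf{k}|^{-1-\alpha}\,C\int_0^{|\mathbf{k}|/k_0}|\widehat\theta(r)|^2 r^\alpha\,dr$; the sole effect of the cutoff is to replace the upper endpoint $+\infty$ by the finite value $|\mathbf{k}|/k_0$.

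This leads me to set $D(\kappa):=C\int_0^\kappa|\widehat\theta(r)|^2 r^\alpha\,dr$, whence $\lim_{\kappa\to\infty}D(\kappa)=D$ immediately (the integral converges since $\theta$ smooth forces fast decay of $\widehat\theta$, while $\alpha>-1$ handles the origin). With this notation the $\mathbf{k}$-th coefficient equals $|\mathbf{k}|^{-3-\alpha}D(|\mathbf{k}|/k_0)=k_0^{-3-\alpha}(|\mathbf{k}|/k_0)^{-3-\alpha}D(|\mathbf{k}|/k_0)$, and I would split the sum over $\mathbb{Z}_0^2$ into the high modes $|\mathbf{k}|>k_0$, which reproduce exactly the displayed main term, and the low modes $0<|\mathbf{k}|\le k_0$, collected into
\[
R_{k_0}(x)=\sum_{0<|\mathbf{k}|\le k_0}D(|\mathbf{k}|/k_0)\,|\mathbf{k}|^{-3-\alpha}P_{\mathbf{k}}e^{i\mathbf{k}\cdot x}.
\]

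It then remains to bound $R_{k_0}$. Since $P_{\mathbf{k}}$ is an orthogonal projection ($\|P_{\mathbf{k}}\|=1$) and $|e^{i\mathbf{k}\cdot x}|=1$, the norm is controlled termwise. For the low modes $\kappa=|\mathbf{k}|/k_0\le 1$, and using $|\widehat\theta(r)|\le\|\theta\|_{L^1}=1$ I obtain $D(\kappa)\le C\int_0^\kappa r^\alpha\,dr=\frac{C}{1+\alpha}\kappa^{1+\alpha}$, so each summand is at most $\frac{C}{1+\alpha}(|\mathbf{k}|/k_0)^{1+\alpha}|\mathbf{k}|^{-3-\alpha}=\frac{C}{1+\alpha}k_0^{-1-\alpha}|\mathbf{k}|^{-2}$. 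Hence $\|R_{k_0}(x)\|\le\frac{C}{1+\alpha}k_0^{-1-\alpha}\sum_{0<|\mathbf{k}|\le k_0}|\mathbf{k}|^{-2}$, and I would close with the two-dimensional lattice estimate $\sum_{0<|\mathbf{k}|\le k_0}|\mathbf{k}|^{-2}\le C''\log k_0$, giving the claimed $\|R_{k_0}(x)\|\le\frac{C'}{\alpha+1}\frac{\log k_0}{k_0^{1+\alpha}}$. The only genuinely non-bookkeeping step is this last sum: the exponent $-2$ is exactly borderline in $\mathbb{Z}^2$, which is precisely what produces the logarithmic factor rather than a clean power, so I would justify it by a shell-counting argument (comparing $\sum_{|\mathbf{k}|\sim R}|\mathbf{k}|^{-2}$ with $\int R^{-1}\,dR$) rather than a naive integral comparison, which is where the constant $C'$ and the sharp $\log k_0$ originate.
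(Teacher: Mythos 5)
Your proposal is correct and follows essentially the same route as the paper: the same change of variables yielding the coefficient $|\mathbf{k}|^{-1-\alpha}D(|\mathbf{k}|/k_0)$ with $D(\kappa)=C\int_0^\kappa|\widehat\theta(r)|^2r^\alpha\,dr$, the same bound $D(\kappa)\le\frac{C}{\alpha+1}\kappa^{1+\alpha}$ via $\|\widehat\theta\|_\infty\le1$, and the same low-mode lattice sum $\sum_{0<|\mathbf{k}|\le k_0}|\mathbf{k}|^{-2}\lesssim\log k_0$ producing the remainder estimate. The only difference is that you spell out the shell-counting justification that the paper leaves implicit.
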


\begin{proof}
As above,%
\[
\mathbb{E}\left[  \Gamma^{2}\left\vert \widehat{\theta}\left(  L\mathbf{k}%
\right)  \right\vert ^{2}\right]  =\left\vert \mathbf{k}\right\vert
^{-1-\alpha}D\left(  \left\vert \mathbf{k}\right\vert /k_{0}\right)  .
\]
The first limit property is obvious. Moreover (using also $\left\Vert
\widehat{\theta}\right\Vert _{\infty}\leq1$)%
\[
D\left(  \kappa\right)  \leq C\frac{\kappa^{\alpha+1}}{\alpha+1}%
\]
hence%
\[
\frac{1}{k_{0}^{3+\alpha}}\sum_{\left\vert \mathbf{k}\right\vert \leq k_{0}%
}\frac{D\left(  \left\vert \mathbf{k}\right\vert /k_{0}\right)  }{\left(
\left\vert \mathbf{k}\right\vert /k_{0}\right)  ^{3+\alpha}}\leq\frac
{C}{\alpha+1}\frac{1}{k_{0}^{1+\alpha}}\sum_{\left\vert \mathbf{k}\right\vert
\leq k_{0}}\left\vert \mathbf{k}\right\vert ^{-2}\leq\frac{C^{\prime}}%
{\alpha+1}\frac{1}{k_{0}^{1+\alpha}}\log k_{0}.
\]

\end{proof}

We thus see that, up to lower order terms, the vortex model with cut-off
corresponds to Kraichnan model with infrared cut-off $k_{0}$ (cf. \cite[eq. (2.3)]{Eyink-2}).

Finally, we remark that the model has the flexibility of multifractality. To
explain it in the simplest possible case, assume
\begin{align*}
\gamma^{2}\left(  r\right)  f\left(  r\right)   &  =\sum_{i=1}^{N}%
C_{i}r^{\alpha_{i}}\\
D_{i} &  :=\int_{0}^{\infty}\left\vert \widehat{\theta}\left(  r\right)
\right\vert ^{2}C_{i}r^{\alpha_{i}}dr.
\end{align*}
Then we get%
\[
Q_{\text{vortex}}\left(  x\right)  =\sum_{i=1}^{N}D_{i}\sum_{\mathbf{k}}%
\frac{1}{\left\vert \mathbf{k}\right\vert ^{3+\alpha_{i}}}P_{\mathbf{k}%
}e^{i\mathbf{k}\cdot x}.
\]
Clearly one can do the same with a continuously distributed multifractality in
place of the finite sum (we void to introduce additional notations to explain
this point).

\begin{remark}
An intriguing but extremely difficult question (we thank an anonymous referee
for it) is whether we may infer the value of the scaling exponent $\zeta$ of
Kraichnan model, or a multifractal version of it, from the similarity with the
vortex noise. It was the main aim of the outstanding book \cite{Chorin}, which
- as admitted by the author - remained open at the time of the book and it is
still open now. Two examples of attempts in this direction have been
\cite{Lions-Majda} and \cite{FG}; in the latter work a multifractal formalism based
on vortex filaments was developed. However, it must be stressed that no one of
these works deduced K41 or other scalings from vortex models; they could only
reproduce scalings chosen a priori.
\end{remark}

\subsection{Covariance of 3D vortex noise}

Next, we turn to the 3D case, and recall the definition of the noise based on vortex filaments \eqref{def-3d}. The covariance of the noise is given by
\[
\left\langle \Q v,w\right\rangle =\mathbb{E}\left[  \Gamma^{2}\int_{\T^3} K_{L}\left(
x,X_{\cdot}\right)  \cdot v\left(  x\right)  dx\int_{\T^3} K_{L}\left(  x^{\prime}%
,X_{\cdot}\right)  \cdot w\left(  x^{\prime}\right)  dx^{\prime}\right],
\]
where 
\[
\int_{\T^3} K_{L}\left(
x,X_{\cdot}\right)  \cdot v\left(  x\right) dx=\int_{\T^3}\int_{\T^3}  v(x)\cdot K(x,y)\times \int_0^U\theta_L(y-X_t)\, dX_t\, dxdy.
\]
For simplicity we set from now on the time-horizon $U=1$, and assume that the 3D Brownian motion $(X_t)$ starts from uniform distribution on $\T^3$, hence for any time $t>0$, the distribution of $X_t$ remains uniform. $(X_t)$ is also independent of $(\Gamma, L)$. Using vector identity we may rewrite
\begin{align*}
\int_{\T^3} K_{L}\left(
x,X_{\cdot}\right)  \cdot v\left(  x\right)dx &=\int_{\T^3} \int_{\T^3} \int_0^1 \theta_L(y-X_t) v(x)\times K(x,y)\cdot dX_tdxdy\\
&=\int_0^1 \left[\theta_L^T * \left(\int_{\T^3} v(x)\times K(x-\cdot )dx \right)\right](X_t) \, \cdot dX_t.
\end{align*}
For the 3D kernel $K$ \eqref{kernel-3d}, we still have the property that $K\left(
x,a\right)  =K\left(  x-a\right)  =-K\left(  a-x\right)  $. 


Our first result is that in 3D the vortex noise has the same covariance structure as in 2D case.
\begin{proposition}
Assume $\theta$ symmetric, and $(X_{t})$ independent of $\left(  \Gamma
,L\right)$ and starts from uniform distribution on $\T^3$. Then%
\begin{equation*}
Q_{\text{vortex}}\left(  x\right)  =\sum_{\mathbf{k\in}\mathbb{Z}_{0}^{3}%
}\mathbb{E}\left[  \Gamma^{2}\left\vert \widehat{\theta}\left(  L\mathbf{k}%
\right)  \right\vert ^{2}\right]  \frac{1}{\left\vert \mathbf{k}\right\vert
^{2}}P_{\mathbf{k}}e^{i\mathbf{k}\cdot x}.%
\end{equation*}
\end{proposition}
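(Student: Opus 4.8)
The plan is to reuse the Parseval computation from the 2D proposition, prefaced by one extra ingredient --- the It\^o isometry --- which is precisely what reduces the filament (a stochastic integral in time along a Brownian path) to the same spatial structure as a 2D point vortex. Writing $G_v(\cdot):=\int_{\T^3}v(x)\times K(x-\cdot)\,dx$ and $F_v:=\theta_L^T\ast G_v$, the rewriting already recorded above reads $\int_{\T^3}K_L(x,X_\cdot)\cdot v(x)\,dx=\int_0^1 F_v(X_t)\cdot dX_t$, and similarly for $w$. Since $(X_t)$ is independent of $(\Gamma,L)$, I would first condition on $(\Gamma,L)$ and evaluate the Wiener expectation of the product of the two stochastic integrals; note that for fixed $L>0$ the integrands $F_v(X_t)$ are bounded, so the integrals are genuine $L^2$ martingales.

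The key step is the It\^o isometry. As $(X_t)$ is a standard 3D Brownian motion with independent components, $d\langle X^i,X^j\rangle_t=\delta_{ij}\,dt$, so the cross-variation of the two integrals collapses to a single time integral,
\[
\mathcal{W}\!\left[\Big(\int_0^1 F_v(X_t)\cdot dX_t\Big)\Big(\int_0^1 F_w(X_t)\cdot dX_t\Big)\right]=\mathcal{W}\!\left[\int_0^1 F_v(X_t)\cdot F_w(X_t)\,dt\right].
\]
Next I would invoke the invariance of the uniform law under Brownian motion on $\T^3$: since $X_0$ is uniform, $X_t$ is uniform for every $t$, whence $\mathcal{W}[F_v(X_t)\cdot F_w(X_t)]=\int_{\T^3}F_v(z)\cdot F_w(z)\,dz$ is independent of $t$, and with $U=1$ the time integration contributes only a factor one. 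Taking the remaining $\E[\Gamma^2\,\cdot\,]$ yields
\[
\langle\Q v,w\rangle=\E\!\left[\Gamma^2\int_{\T^3}F_v(z)\cdot F_w(z)\,dz\right],
\]
which is structurally identical to the 2D expression $\E[\Gamma^2\int(\theta_L\ast K\ast v)(\theta_L\ast K\ast w)]$; the entire stochastic-in-time structure has washed out.

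It remains to run the 2D Fourier argument with the vector (cross-product) kernel. By Parseval, $\int_{\T^3}F_v\cdot F_w\,dz=\sum_{\mathbf{k}}\widehat{F_v}(\mathbf{k})\cdot\overline{\widehat{F_w}(\mathbf{k})}$, and since $\widehat{F_v}(\mathbf{k})=\widehat{\theta}(L\mathbf{k})\,\widehat{G_v}(\mathbf{k})$ with $\widehat{G_v}(\mathbf{k})=\frac{i\,\mathbf{k}\times\widehat{v}(\mathbf{k})}{|\mathbf{k}|^2}$ (using $\widehat{K}(\mathbf{k})=i\mathbf{k}/|\mathbf{k}|^2$ acting by cross product, the 3D analogue of $\widehat{K}(\mathbf{k})=i\mathbf{k}^\perp/|\mathbf{k}|^2$), I would compute
\[
\widehat{F_v}(\mathbf{k})\cdot\overline{\widehat{F_w}(\mathbf{k})}=|\widehat{\theta}(L\mathbf{k})|^2\frac{1}{|\mathbf{k}|^4}\big(\mathbf{k}\times\widehat{v}\big)\cdot\big(\mathbf{k}\times\overline{\widehat{w}}\big).
\]
The one genuinely algebraic point is the identity $(\mathbf{k}\times\ba)\cdot(\mathbf{k}\times\bp)=|\mathbf{k}|^2(\ba\cdot\bp)-(\mathbf{k}\cdot\ba)(\mathbf{k}\cdot\bp)$, which turns the bracket into $|\mathbf{k}|^2\langle P_{\mathbf{k}}\widehat{v}(\mathbf{k}),\overline{\widehat{w}(\mathbf{k})}\rangle$ and so produces the Leray projector $P_{\mathbf{k}}=I-\mathbf{k}\otimes\mathbf{k}/|\mathbf{k}|^2$. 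Pulling $\E[\Gamma^2\,\cdot\,]$ through the sum (justified by the integrability already established for $h\in H$) gives $\langle\Q v,w\rangle=\sum_{\mathbf{k}\in\Z_0^3}\E[\Gamma^2|\widehat{\theta}(L\mathbf{k})|^2]\,|\mathbf{k}|^{-2}\langle P_{\mathbf{k}}\widehat{v},\overline{\widehat{w}}\rangle$, i.e. the asserted covariance function. The main obstacle is conceptual rather than computational: recognizing that the It\^o isometry together with torus-stationarity is exactly what renders the 3D filament covariance identical in form to the 2D one; once past that, the remainder mirrors the 2D proof up to the cross-product bookkeeping.
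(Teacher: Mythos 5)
Your proposal is correct and follows essentially the same route as the paper: It\^o isometry to collapse the product of the two stochastic line integrals into a single time integral, stationarity of the uniform law under Brownian motion on $\T^3$ to turn that into a spatial integral, then Parseval with $\widehat{K}(\mathbf{k})=i\mathbf{k}/|\mathbf{k}|^{2}$ and a cross-product identity to produce the Leray projector $P_{\mathbf{k}}$. The only cosmetic difference is that you invoke the Lagrange identity $(\mathbf{k}\times\mathbf{a})\cdot(\mathbf{k}\times\mathbf{b})=|\mathbf{k}|^{2}(\mathbf{a}\cdot\mathbf{b})-(\mathbf{k}\cdot\mathbf{a})(\mathbf{k}\cdot\mathbf{b})$ directly, whereas the paper expands the triple cross product $\overline{\widehat{K}}\times(\widehat{v}\times\widehat{K})$ first; these are the same algebra.
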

\begin{proof}
\begin{align*}
&\langle \Q v, w\rangle =\E\Big[\Gamma^2\int_0^1 \left[\theta_L * \left(\int_{\T^3} v(x)\times K(x-\cdot )dx \right)\right](X_t) \, \cdot dX_t\\
&\quad \quad\quad\quad\quad\quad   \int_0^1 \left[\theta_L * \left(\int_{\T^3} w(x')\times K(x'-\cdot )dx' \right)\right](X_t) \, \cdot dX_t\Big]\\
&=\E\left[ \Gamma^2\int_0^1 \left[\theta_L * \left(\int_{\T^3} v(x)\times K(x-\cdot )dx \right)\right](X_t)\cdot\left[\theta_L * \left(\int_{\T^3} w(x')\times K(x'-\cdot )dx' \right)\right](X_t)\, dt\right]\\
&=\E\left[ \Gamma^2\int_{\T^3}\left[\theta_L * \left(\int_{\T^3} v(x)\times K(x-\cdot )dx \right)\right](z)\cdot\left[\theta_L * \left(\int_{\T^3} w(x')\times K(x'-\cdot )dx' \right)\right](z)\,dz\right],
\end{align*}
where we take conditional expectation with respect to $(X_t)$ first, using its time-stationarity and uniform distribution, whereas the randomness of $(\Gamma, L)$ remains.

By Parseval theorem  and vector identities, we may rewrite
\begin{align*}
\langle \Q v, w\rangle &=\E\left[\Gamma^2\sum_{\bk\in\Z^3_0} \wh{\theta_L}(\bk)\Big(\int_{\T^3} v(x)\times K(x-\cdot )dx\Big)^\wedge(\bk)\cdot \ovl{\wh{\theta_L}(\bk)\Big(\int_{\T^3} w(x)\times K(x-\cdot )dx\Big)^\wedge(\bk)}\right]\\
&=\sum_{\bk\in\Z^3_0}\E\left[\Gamma^2\left|\wh{\theta_L}(\bk)\right|^2\big(\wh{v}(\bk)\times \wh{K}(\bk)\big)\cdot \big(\ovl{\wh{w}(\bk)}\times \ovl{\wh{K}(\bk)}\big)\right]\\
&=\sum_{\bk\in\Z^3_0}\E\left[\Gamma^2\left|\wh{\theta_L}(\bk)\right|^2\ovl{\wh{w}(\bk)}\cdot\Big(\ovl{\wh{K}(\bk)}\times \big(\wh{v}(\bk)\times \wh{K}(\bk)\big)\Big)\right].
\end{align*}
By properties of triple cross product, we have that 
\begin{align*}
\ovl{\wh{K}(\bk)}\times \big(\wh{v}(\bk)\times \wh{K}(\bk)\big)
&=\wh{v}(\bk)\left(\wh{K}(\bk)\cdot\ovl{\wh{K}(\bk)}\right)-\wh{K}(\bk)\left(\ovl{\wh{K}(\bk)}\cdot\wh{v}(\bk)\right),
\end{align*}
hence 
\begin{align*}
&\ovl{\wh{w}(\bk)}\cdot\Big(\ovl{\wh{K}(\bk)}\times \big(\wh{v}(\bk)\times \wh{K}(\bk)\big)\Big) \\
&=\left|\wh{K}(\bk)\right|^2\big(\wh{v}(\bk)\cdot\ovl{\wh{w}(\bk)}\big) - \big(\ovl{\wh{w}(\bk)}\cdot\wh{K}(\bk)\big)\big(\ovl{\wh{K}(\bk)}\cdot\wh{v}(\bk)\big)\\
&=\left|\wh{K}(\bk)\right|^2\big(\wh{v}(\bk)\cdot\ovl{\wh{w}(\bk)}\big) - \ovl{\wh{w}(\bk)}^T\big(\wh{K}(\bk)\otimes\ovl{\wh{K}(\bk)}\big)\wh{v}(\bk)\\
&=\frac{1}{|\bk|^2}\big(\wh{v}(\bk)\cdot\ovl{\wh{w}(\bk)}\big)-\ovl{\wh{w}(\bk)}^T\big(\frac{\bk}{|\bk|^2}\otimes\frac{\bk}{|\bk|^2}\big)\wh{v}(\bk)\\
&=\frac{1}{|\bk|^2}\left\langle\left(I-\frac{\bk}{|\bk|}\otimes\frac{\bk}{|\bk|}\right)\wh{v}(\bk), \ovl{\wh{w}(\bk)}\right\rangle,
\end{align*}
recalling that in 3D,
\[
\wh{K}(\bk)=i\frac{\bk}{|\bk|^2}.
\]
Thus, we may conclude that 
\begin{align*}
\langle \Q v, w\rangle &=\sum_{\bk\in\Z^3_0}\E\left[\Gamma^2\left|\wh{\theta_L}(\bk)\right|^2\right]\frac{1}{|\bk|^2}\left\langle P_{\mathbf{k}%
}\widehat{v}\left(  \mathbf{k}\right)  ,\overline{\widehat{w}\left(
\mathbf{k}\right)  }\right\rangle,
\end{align*}
where $P_\bk=I-\frac{\bk}{|\bk|}\otimes\frac{\bk}{|\bk|}$ is the projector on the orthogonal to $\bk$. This yields in turn that the covariance matrix of the noise is given by
\[
Q_{\text{vortex}}\left(  x,x^{\prime}\right) =\sum_{\mathbf{k}\in\Z^3_0}\mathbb{E}\left[  \Gamma^{2}\left\vert \widehat{\theta
_{L}}\left(  \mathbf{k}\right)  \right\vert ^{2}\right]  \frac
{1}{\left\vert \mathbf{k}\right\vert ^{2}}P_{\mathbf{k}}e^{i\mathbf{k}%
\cdot\left(  x-x^{\prime}\right)  }.
\]
\end{proof}

This formula agrees with the formula \eqref{cov-2d} obtained for 2D, hence Corollary \ref{cor:fgf} applies in 3D without change (except for summation over $\bk\in\Z^3_0$). 

Our result in 3D covers Kraichnan noise with parameter 
\[
\zeta=\alpha>-1.
\]
We can also restrict the vortices to small scales by introducing a cutoff $k_0$, as in Corollary \ref{cor:fractal}. Here, we need to restrict to $\alpha>0$ in its statement, so that the remainder $R_{k_0}(x)$ is of lower order:
\[
\left\Vert R_{k_{0}}\left(  x\right)  \right\Vert \leq\frac{C^{\prime}}%
{\alpha+1}\frac{1}{k_{0}^{\alpha}}.%
\]

\section{The effect of vortex structure noise on passive scalars}

\subsection{Introduction}

Regarding eddy diffusion enhancement in domains with boundary, we recall the
following theorem proved in \cite[Theorems 1.1, 1.3]{FGL}. Here, we have a
passive scalar $T$ driven by the white-in-time, correlated-in-space noise $\partial_tW$ produced by our vortex
structures, where $W(t,x)$ is the limit Gaussian process obtained via the invariance principle in Theorem \ref{thm:inv-prin}:
\begin{align*}
\partial_{t}T+\partial_{t} W\circ\nabla T = \kappa\Delta T,
\end{align*}
$\circ$ denotes Stratonovich integration, and scalar $\kappa>0$. We denote the smallest eigenvalue of the matrix $Q(x,x)$ by
\begin{align*}
q(x,x)  &  := \min_{0\neq \xi\in\R^d}\frac{\xi^{T} Q(x,x)\xi}{\xi^{T}\xi},
\end{align*}
and the squared operator norm $\|Q^{1/2}\|_{L^2(\D)\to L^2(\D)}^2$ by
\begin{align*}
\epsilon_{Q}  &  := \sup_{0\neq v\in H}\frac{\int_{\D}\int_{\D}v^{T}(x)Q(x,y)v(y)dxdy}%
{\int_{\D}v(x)^{T}v(x)dx}.
\end{align*}

\begin{theorem}\label{thm:eddy}
{{\cite[Theorems 1.1, 1.3]{FGL}}}
\newline(a). 
For any $T_0\in H$ measurable, and any $t\ge0$, we have that
\[
\mathbb{E}\left[  \left(\int_{\D}|T(t,x)|dx\right)^2\right]  \le\left(
\frac{\epsilon_{Q}}{\kappa}+2|\D|e^{-2t\lambda_{\D,\kappa,Q}}\right)
\mathbb{E}\left[  ||T_{0}||^{2}_{L^{2}}\right] ,
\]
where $\lambda_{\D,\kappa,Q}$ is the first eigenvalue of the elliptic operator $-A_Q$, for 
\[
A_Q:=\kappa\Delta+\frac{1}{2}\text{div}(Q(x,x)\nabla \cdot).
\]
(b). There exists a constant $C_{\D,d}>0$ such that
\[
\lambda_{\D,\kappa,Q}\ge C_{\D,d}\min\left(  \sigma^{2}, \kappa/\delta\right)
,
\]
for every $Q$ such that
\[
\inf_{x\in \D_\delta}q(x,x)\ge\sigma^{2}.
\]
\end{theorem}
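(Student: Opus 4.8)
Since the statement is quoted from \cite{FGL}, I will sketch the strategy I would follow to reprove it. The starting point for part (a) is to pass from the Stratonovich to the It\^o form of the passive scalar equation. Writing the limiting noise as $W=\sum_k\sigma_k\beta_k$ with $(\beta_k)$ independent scalar Brownian motions and $Q(x,y)=\sum_k\sigma_k(x)\otimes\sigma_k(y)$, the Stratonovich-to-It\^o corrector for a divergence-free transport noise is exactly $\tfrac12\,\mathrm{div}(Q(x,x)\nabla\cdot)$, so $T$ solves $dT=A_QT\,dt-\sum_k(\sigma_k\cdot\nabla T)\,d\beta_k$ with $A_Q=\kappa\Delta+\tfrac12\mathrm{div}(Q(x,x)\nabla\cdot)$. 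Conditioning on the initial datum kills the martingale and shows that $\bar T_t:=\E[T_t\mid\mathcal F_0]=e^{tA_Q}T_0$ evolves by the deterministic operator $A_Q$. Since $A_Q=\mathrm{div}((\kappa I+\tfrac12 Q(x,x))\nabla\cdot)$ is self-adjoint and nonpositive on $H^1_0$, one gets $\|\bar T_t\|_{L^2}\le e^{-\lambda t}\|T_0\|_{L^2}$ with $\lambda:=\lambda_{\D,\kappa,Q}$ its spectral gap.

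The plan for (a) is then to split $T_t=\bar T_t+(T_t-\bar T_t)$ and estimate the two pieces in different norms, using $\big(\int_\D|T_t|\big)^2\le 2\big(\int_\D|\bar T_t|\big)^2+2\big(\int_\D|T_t-\bar T_t|\big)^2$. For the mean part I would use $\int_\D|\bar T_t|\le|\D|^{1/2}\|\bar T_t\|_{L^2}\le|\D|^{1/2}e^{-\lambda t}\|T_0\|_{L^2}$; squaring and taking $\E$ produces the transient term $2|\D|e^{-2t\lambda}\E\|T_0\|_{L^2}^2$. For the conditional fluctuation the key point, and the \emph{main obstacle}, is that one must not estimate it through the pointwise diagonal $Q(x,x)$, which is large precisely in the regime of interest, but through the operator norm $\epsilon_Q=\|Q\|_{H\to H}$. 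Concretely, after reducing $\int_\D|T|$ to a linear functional (either assuming $T_0\ge 0$ and using positivity, so $\int_\D|T|=\langle 1,T\rangle$, or invoking Kato's inequality so that $|T|$ is a subsolution), I would apply the It\^o isometry to $\langle \phi,T_t-\bar T_t\rangle$, integrate by parts to move $\nabla$ onto the adjoint semigroup $e^{rA_Q}\phi$, and bound the quadratic-variation density by $\langle\,\cdot\,,Q\,\cdot\,\rangle\le\epsilon_Q\|\cdot\|^2$. The remaining time integral is then controlled by the dissipation identity $\int_0^\infty\|\nabla e^{rA_Q}\phi\|_{L^2}^2\,dr\le\tfrac1{2\kappa}\|\phi\|_{L^2}^2$, which is exactly where the factor $1/\kappa$ in $\epsilon_Q/\kappa$ originates. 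Collecting the two contributions yields the stated bound.

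For part (b) I would work with the variational characterization of the first eigenvalue,
\[
\lambda_{\D,\kappa,Q}=\inf_{0\neq\phi\in H^1_0(\D)}\frac{\displaystyle\int_\D\nabla\phi^{T}\big(\kappa I+\tfrac12 Q(x,x)\big)\nabla\phi\,dx}{\displaystyle\int_\D\phi^2\,dx},
\]
and lower-bound the numerator. On the interior $\D_\delta$ the hypothesis $q(x,x)\ge\sigma^2$ gives $\nabla\phi^{T}Q(x,x)\nabla\phi\ge\sigma^2|\nabla\phi|^2$, so the numerator is at least $\kappa\|\nabla\phi\|_{L^2(\D)}^2+\tfrac{\sigma^2}{2}\|\nabla\phi\|_{L^2(\D_\delta)}^2$.

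I would then run a dichotomy. If $\phi$ carries a definite fraction of its gradient energy on $\D_\delta$, the interior term wins and, together with the Dirichlet--Poincar\'e inequality on $\D$ (whose constant is absorbed into $C_{\D,d}$), yields a rate $\gtrsim\sigma^2$. If instead $\phi$ is nearly constant on $\D_\delta$, then since it must vanish on $\partial\D$ it has to drop to zero across the collar $\D\setminus\D_\delta$ of width $\delta$; the elementary one-dimensional estimate $\big(\int_0^\delta\psi'\big)^2\le\delta\int_0^\delta|\psi'|^2$ forces $\kappa\|\nabla\phi\|_{L^2(\D\setminus\D_\delta)}^2\gtrsim\tfrac{\kappa}{\delta}\|\phi\|_{L^2}^2$, producing the competing rate $\kappa/\delta$. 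Taking the worse of the two regimes gives $\lambda_{\D,\kappa,Q}\ge C_{\D,d}\min(\sigma^2,\kappa/\delta)$. The delicate point is the collar estimate and its gluing to the interior bound; this is where the geometric constant $C_{\D,d}$ and the precise power of $\delta$ are produced, and I expect it to be the technical heart of (b).
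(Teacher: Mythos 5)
The paper does not prove this theorem: it is quoted verbatim from \cite[Theorems 1.1, 1.3]{FGL} and used as a black box, so there is no in-paper argument to compare yours against. Judged on its own terms, your sketch reproduces the right architecture of the cited proof: the It\^o form with corrector $\tfrac12\mathrm{div}(Q(x,x)\nabla\cdot)$, the splitting into conditional mean plus martingale fluctuation, the spectral gap of $A_Q$ for the transient term, the pairing of $\epsilon_Q$ with a $1/(2\kappa)$ dissipation identity for the stationary term, and a Rayleigh-quotient dichotomy (interior gradient energy versus a boundary collar of width $\delta$) for part (b).

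One step in (a), as you wrote it, does not close. For the fluctuation term you propose to move the gradient onto the adjoint semigroup and bound the quadratic variation by $\epsilon_Q\|T_s\,\nabla e^{(t-s)A_Q}\phi\|_{L^2}^2$, then finish with $\int_0^\infty\|\nabla e^{rA_Q}\phi\|_{L^2}^2\,dr\le\tfrac1{2\kappa}\|\phi\|_{L^2}^2$. To decouple $T_s$ from $\nabla e^{rA_Q}\phi$ inside that $L^2$ norm you would need either $\|T_s\|_{L^\infty}$ (not available when $T_0$ is merely in $L^2$, and the asserted bound is in terms of $\E\|T_0\|_{L^2}^2$) or $\|\nabla e^{rA_Q}\phi\|_{L^\infty}$, which behaves like $r^{-1/2}$ as $r\to0$ and is not square-integrable in time. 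The estimate closes if you integrate by parts the other way: write the stochastic integrand as $-\langle (e^{rA_Q}\phi)\,\sigma_k,\nabla T_s\rangle$, use the maximum principle $\|e^{rA_Q}1\|_{L^\infty}\le1$ so that the quadratic variation is bounded by $\epsilon_Q\|\nabla T_s\|_{L^2}^2$, and then invoke the energy balance $\E\|T_t\|_{L^2}^2+2\kappa\int_0^t\E\|\nabla T_s\|_{L^2}^2\,ds=\E\|T_0\|_{L^2}^2$, which holds because the Stratonovich corrector cancels exactly against the It\^o quadratic-variation term in the $L^2$ identity. This is where the factor $\epsilon_Q/(2\kappa)$, hence $\epsilon_Q/\kappa$ after the factor $2$ from your splitting, actually originates. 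The remainder of your sketch, including the collar estimate and the $\min(\sigma^2,\kappa/\delta)$ dichotomy for (b), is the right strategy, with the technical work located exactly where you say it is.
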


In view of this theorem, our aim is to show that the noises based on vortex structures in 2D and 3D that we constructed in Sections \ref{sec:jump-noise}-\ref{sec:noise}, for small $L$, enjoy the property that they have small $\ep_Q$ and large $q(x,x)$, simultaneously, once the other parameters of the model are tuned properly. Here, we assume that $\Gamma, U, L, X_\cdot$ are independent.

For technical reasons, we demonstrate this only for the torus $\D=\T^d$, $d=2,3$. The same conclusions should hold true for any regular domains $\D$, but the corrector part of the Green function $h_{\D,x_0}(x)$ is difficult to handle, hence we prefer to state in the simple case of torus. Note in this case we do not have a boundary hence $\D_\delta=\D$, $\delta=0$, and we can put the stopping time $\tau=\infty$ in the 3D case.

\subsection{The 2D case}

The following theorem applies to any realization $\ell$ of $L$. For fixed $\ell>0$, we shall use (recall \eqref{def-2d})
\begin{align*}
Q_\ell\left(  x,y\right)   &  =\mathbb{E}\left[  \Gamma ^{2}K_{\ell}%
(x,X_{0})\otimes K_{\ell}(y,X_{0})\right].
\end{align*}
Therefore we have, for $\xi\in\mathbb{R}^{2}$%
\[
\xi^{T}Q_\ell\left(  x,x\right) \xi=\mathbb{E}\left[  \Gamma ^{2}\left\vert
K_{\ell}(x,X_{0})\cdot \xi\right\vert ^{2}\right]  
\]
while for $v\in H$
\[
\left\langle \mathbb{Q}_\ell v,v\right\rangle =\int_{\T^2}\int_{\T^2}v\left(  x\right)
^{T}Q_\ell \left(  x,y\right)  v\left(  y\right)  dxdy=%
\mathbb{E}\left[  \Gamma ^{2}\left(  \int_{\T^2}v\left(  x\right)  \cdot K_{\ell}%
(x,X_{0})dx\right)  ^{2}\right]  .
\]
In the next statement we set $\sigma^2=\E(\Gamma^2)$.
\begin{theorem}\label{thm-2D}
i) There exists a finite constant $C$ such that for every $v\in H$ and $\ell\in(0,1)$,
\[
\frac{\left\langle \mathbb{Q}_{\ell}v,v\right\rangle}{\left\Vert v\right\Vert _{H}^{2}} \leq C\sigma^2.%
\]

ii) 
There exists a constant $c>0$ 
such
that for all $x\in\T^2$, $v\in\mathbb{R}^{2}$ and $\ell\in(0,1)$,
\[
\frac{v^{T}Q_{\ell}\left(  x,x\right)  v}{\left\vert
v\right\vert ^{2}} \ge c\;\sigma^2|\log\ell|.
\]
\end{theorem}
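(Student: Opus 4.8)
The plan is to obtain both parts from the Fourier (spectral) representation of $\mathbb{Q}_\ell$ already derived on the torus. Since the theorem fixes a realization $L=\ell$, and $\Gamma$ is independent of $(L,X_0)$ with $X_0$ uniform and $\theta$ symmetric, the earlier proposition on the 2D covariance applies with $\E[\Gamma^2|\widehat\theta(L\bk)|^2]$ replaced by $\sigma^2|\widehat\theta(\ell\bk)|^2$, giving
\[
Q_\ell(x,x')=\sigma^2\sum_{\bk\in\Z_0^2}|\widehat\theta(\ell\bk)|^2\frac{1}{|\bk|^2}P_{\bk}\,e^{i\bk\cdot(x-x')}.
\]
Diagonalizing on Fourier modes and using that every $v\in H$ is solenoidal, so $P_{\bk}\widehat v(\bk)=\widehat v(\bk)$, I obtain the clean quadratic form
\[
\langle\mathbb{Q}_\ell v,v\rangle=\sigma^2\sum_{\bk\in\Z_0^2}|\widehat\theta(\ell\bk)|^2\frac{1}{|\bk|^2}\,|\widehat v(\bk)|^2 .
\]

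For part (i) this is immediate: since $\theta$ is a probability density, $\|\widehat\theta\|_\infty\le\int\theta=1$, and $|\bk|\ge1$ on $\Z_0^2$, so each Fourier multiplier $\sigma^2|\widehat\theta(\ell\bk)|^2|\bk|^{-2}$ is at most $\sigma^2$. Hence $\langle\mathbb{Q}_\ell v,v\rangle\le\sigma^2\sum_{\bk}|\widehat v(\bk)|^2=\sigma^2\|v\|_H^2$, giving the bound with $C=1$, uniformly in $\ell\in(0,1)$.

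For part (ii) I would work with the diagonal $Q_\ell(x,x)=\sigma^2\sum_{\bk}|\widehat\theta(\ell\bk)|^2|\bk|^{-2}P_{\bk}$, which is independent of $x$, and bound its smallest eigenvalue from below. The key algebraic fact in 2D is the pairing identity $P_{\bk}+P_{\bk^\perp}=I$ together with $|\bk^\perp|=|\bk|$ (if $\widehat\theta$ happened to be radial the diagonal would be an exact multiple of the identity; in general this pairing absorbs the directional dependence). For any unit vector $v$ at least one of $v^TP_{\bk}v,\ v^TP_{\bk^\perp}v$ is $\ge\tfrac12$. Choosing $\rho\in(0,1]$ with $|\widehat\theta(\xi)|^2\ge\tfrac12$ for $|\xi|\le\rho$ (possible by continuity and $\widehat\theta(0)=1$), and restricting the sum to $|\bk|\le\rho/\ell$ so that both $\ell\bk$ and $\ell\bk^\perp$ lie in that ball, I can discard the $v$-dependence via the pairing and reduce to
\[
v^TQ_\ell(x,x)v\ \ge\ c\,\sigma^2\!\!\sum_{1\le|\bk|\le\rho/\ell}\frac{1}{|\bk|^2}\,|v|^2 .
\]

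The crux is then the purely geometric lower bound $\sum_{1\le|\bk|\le R}|\bk|^{-2}\ge c_0\log R$, obtained by counting lattice points in dyadic annuli (the annulus $2^{j}\le|\bk|<2^{j+1}$ contains $\gtrsim 2^{2j}$ points, each contributing $\gtrsim 2^{-2j}$). With $R=\rho/\ell$ this yields the claimed $|\log\ell|$ growth for small $\ell$. The main obstacle I anticipate is bookkeeping rather than substance: producing a single constant $c$ valid for every $\ell\in(0,1)$, since for $\ell$ bounded away from $0$ the quantity $\log(\rho/\ell)$ need not control $|\log\ell|$. That regime I would dispatch separately, noting that $|\log\ell|$ is then bounded while $\sum_{\bk}|\widehat\theta(\ell\bk)|^2|\bk|^{-2}$ is a continuous, strictly positive function of $\ell$ on the compact range, so the ratio stays bounded below; combining the two regimes yields a uniform $c>0$ and completes part (ii).
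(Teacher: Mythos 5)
Your proof is correct in substance but follows a genuinely different route from the paper's. You work entirely on the Fourier side, importing from Section \ref{sec:covariance} the representation $Q_\ell(x,x')=\sigma^2\sum_{\mathbf{k}\in\mathbb{Z}_0^2}|\widehat\theta(\ell\mathbf{k})|^2|\mathbf{k}|^{-2}P_{\mathbf{k}}e^{i\mathbf{k}\cdot(x-x')}$; part (i) then falls out of $\|\widehat\theta\|_\infty\le1$ and $|\mathbf{k}|\ge1$ (with the sharp constant $C=1$), and part (ii) from the 2D pairing $P_{\mathbf{k}}+P_{\mathbf{k}^\perp}=I$, which removes the direction dependence exactly, plus the lattice estimate $\sum_{1\le|\mathbf{k}|\le\rho/\ell}|\mathbf{k}|^{-2}\gtrsim\log(\rho/\ell)$. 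The paper instead argues in physical space: (i) is a Schur-type bound built on $\int_{\T^2}|K_\ell(x,X_0)|\,dx\le C$ and $\sup_x\mathbb{E}[\Gamma^2|K_\ell(x,X_0)|]\lesssim\sigma^2$, and (ii) is a cone construction reducing the diagonal to the scaling of $\int(\theta_\ell*|\cdot|^{-1})^2\gtrsim|\log\ell|$. Your version is shorter and makes the logarithm transparent as the divergence of $\sum|\mathbf{k}|^{-2}$, but it buys this at the price of exact homogeneity: the Fourier formula requires $X_0$ uniform on $\T^2$, whereas the paper's physical-space argument only needs $p_{\min}\,Leb\le p_0\le p_{\max}\,Leb$ (the hypotheses its proof actually invokes) and is the version with a chance of surviving on a domain with boundary. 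Two small points to tighten: in the pairing step, state explicitly that you first discard all $|\mathbf{k}|>\rho/\ell$ (legitimate since every summand is nonnegative) and that the truncated index set is invariant under $\mathbf{k}\mapsto\mathbf{k}^\perp$ with $|\widehat\theta(\ell\mathbf{k}^\perp)|^2\ge\tfrac12$ as well, so the $v$-dependence cancels exactly; and in the regime where $\ell$ is bounded away from $0$, justify strict positivity of the smallest eigenvalue by exhibiting two linearly independent frequencies with $\widehat\theta(\ell\mathbf{k})\ne0$ (available since $\widehat\theta$ is entire and equals $1$ at the origin), and use the rapid decay of $\widehat\theta$ to get continuity of $\ell\mapsto Q_\ell(x,x)$ on the compact range --- the crude bound $|\mathbf{k}|^{-2}$ alone is not summable in 2D, so ``continuous'' needs this extra word (or replace continuity by lower semicontinuity, which is free).
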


\begin{remark}
We can choose $\sigma^2=\E(\Gamma^2)$ to be small, then choose $\ell$ small enough such that $\sigma^2|\log\ell|$ is large, to fulfill the conditions in Theorem \ref{thm:eddy}.
\end{remark}

\begin{proof}
Since $\D=\T^2$, the function $\nabla
_{x}^{\perp}h_{\D}\left(  x,y\right)  $ is bounded above uniformly and does not affect the computations on $K\left(  x,y\right)$, which essentially can be based only on the term
$\frac{1}{2\pi}\frac{\left(  x-y\right)  ^{\perp}}{\left\vert x-y\right\vert
^{2}}$. Thus, 
we use
the approximation, for all $x\in \T^2$, a.s.,%
\[
\left\vert K_{\ell}(x,X_{0})\right\vert \leq\int_{\T^2}\left\vert K\left(
x,y\right)  \right\vert \theta_{\ell}(y-X_{0})dy\sim\frac{1}{2\pi}\int%
_{\T^2}\frac{1}{\left\vert x-y\right\vert }\theta_{\ell}(y-X_{0})dy.
\]
Let $C_{K_{\ell}}$ be the random variable defined as
\[
C_{K_{\ell}}:=\int_{\T^2}\left\vert K_{\ell}(x,X_{0})\right\vert dx
\]
Under our approximation we have%
\begin{align*}
\int_{\T^2 }\left\vert K_{\ell}(x,X_{0})\right\vert dx  &  \le \frac{1}{2\pi}%
\int_{\T^2 }\int_{\T^2 }\frac{1}{\left\vert x-y\right\vert }\theta_{\ell}%
(y-X_{0})dydx +C_1\\
&  =\frac{1}{2\pi}\int_{\T^2 }\left(  \int_{\T^2 }\frac{1}{\left\vert x-y\right\vert
}dx\right)  \theta_{\ell}(y-X_{0})dy +C_1\\
&  \leq C\int_{\T^2 }\theta_{\ell}(y-X_{0})dy+C_1=C+C_1,
\end{align*}
hence $C_{K_{\ell}}$ is finite a.s. and even uniformly bounded above. Then%
\begin{align*}
\left\langle \mathbb{Q}_\ell v,v\right\rangle  &  \leq%
\mathbb{E}\left[  \Gamma ^{2}\left(  \int_{\T^2 }\left\vert v\left(  x\right)  \right\vert
\left\vert K_{\ell}(x,X_{0})\right\vert dx\right)  ^{2}\right] \\
&  \leq\mathbb{E}\left[ \Gamma ^{2} C_{K_{\ell}}^{2}\left(  \int%
_{\T^2 }\left\vert v\left(  x\right)  \right\vert \frac{\left\vert K_{\ell
}(x,X_{0})\right\vert }{C_{K_{\ell}}}dx\right)  ^{2}\right] \\
&  \leq\mathbb{E}\left[  \Gamma ^{2}C_{K_{\ell}}^{2}\int_{\T^2 }\left\vert
v\left(  x\right)  \right\vert ^{2}\frac{\left\vert K_{\ell}(x,X_{0}%
)\right\vert }{C_{K_{\ell}}}dx\right] \\
&  =\mathbb{E}\left[ \Gamma ^{2} C_{K_{\ell}}\int_{\T^2 }\left\vert v\left(
x\right)  \right\vert ^{2}\left\vert K_{\ell}(x,X_{0})\right\vert dx\right]
\end{align*}
Let $\widetilde{C}_{K_{\ell}}$ be the deterministic constant
defined as
\[
\widetilde{C}_{K_{\ell}}:=\sup_{x\in \T^2 }\mathbb{E}\left[ \Gamma^2 C_{K_{\ell}%
}\left\vert K_{\ell}(x,X_{0})\right\vert \right]  <\infty.
\]
We have proved%
\[
\left\langle \mathbb{Q}_\ell v,v\right\rangle \leq \widetilde{C}%
_{K_{\ell}}\left\Vert v\right\Vert _{H}^{2}.
\]
Concerning the size of $\widetilde{C}_{K_{\ell}}$, we have, under the assumptions that $p_{0}$ has a bounded
density%
\begin{align*}
\widetilde{C}_{K_{\ell}}  &  \leq C\sup_{x\in \T^2 }\mathbb{E}\left[  \Gamma^2\left\vert
K_{\ell}(x,X_{0})\right\vert \right] \\
&  \sim\frac{C}{2\pi}\sup_{x\in \T^2 }\mathbb{E}\left[\Gamma^2  \int_{\T^2 }\frac
{1}{\left\vert x-y\right\vert }\theta_{\ell}(y-X_{0})dy\right]\\
& =\frac{C}{2\pi}\sup_{x\in \T^2 }\E \left[\Gamma^2  \int_{\T^2 }\int\frac
{1}{\left\vert x-y\right\vert }\theta_{\ell}(y-x_{0})p_0(x_0)dx_0dy\right]\\
&\le \frac{Cp_{max}}{2\pi}\sup_{x\in \T^2 }\E \left[\Gamma^2  \int_{\T^2 }\frac
{1}{\left\vert x-y\right\vert }dy\right]\\
&  \leq C(p_{max},\T^2)\E (\Gamma^2)
\end{align*}
since 
\[
\sup_{x\in \T^2}\int_{\T^2 }\frac
{1}{\left\vert x-y\right\vert }dy\le C_{\T^2}.
\]
Therefore
\[
\left\langle \mathbb{Q}_\ell v,v\right\rangle \leq C\E (\Gamma ^{2})\left\Vert
v\right\Vert _{H}^{2}.
\]
This quantity is small if $\E (\Gamma ^{2})$ is small.

\smallskip
Concerning $v^{T}Q\left(  x,x\right)  v$, $v\in\mathbb{R}^{2}$, we have, using
again the simplified asymptotics,
\begin{align*}
v^{T}Q\left(  x,x\right)  v  &  =\E \left(\Gamma ^{2}\int_{\T^2}\left\vert K_{\ell
}(x,x_{0})\cdot v\right\vert ^{2}p_{0}\left(  dx_{0}\right) \right)\\
&  \sim\E \left(\frac{\Gamma ^{2}}{\left(  2\pi\right)  ^{2}}\int_{\T^2%
}\left\vert \int_{\T^2}\frac{\left(  x-y\right)  ^{\perp}\cdot
v}{\left\vert x-y\right\vert ^{2}}\theta_{\ell}(y-x_{0})dy\right\vert
^{2}p_{0}\left(  dx_{0}\right)\right)  .
\end{align*}
Given any $x\in \T^2$ and unit vector $v\in\mathbb{R}^{2}$,
there is a cone $C\left(  x,v\right)  \subset \T^2$ (a set of the form $x+rw$,
$r\in\left[  0,r_{0}\right]  $, $\left\vert w\right\vert =1$, $w\cdot
e\geq\alpha$ for some $\left\vert e\right\vert =1$ and $\alpha\in\left(
0,1\right)  $) such that
\[
\left(  x-x_{0}\right)  ^{\perp}\cdot v\geq\frac{1}{2}\left\vert
x-x_{0}\right\vert \left\vert v\right\vert \text{ for every }x_{0}\in C\left(
x,v\right)
\]
and
\[
\left\vert C\left(  x,v\right)  \right\vert \geq\eta>0.
\]

Moreover, assume $p_{0}\left(  dx_{0}\right)  $ is bounded below by $p_{\min
}Leb$ for some constant $p_{\min}>0$. We then have%
\[
v^{T}Q_\ell \left(  x,x\right)  v\geq \E \left(\frac{\Gamma ^{2}p_{\min}}{\left(
2\pi\right)  ^{2}}\int_{C\left(  x,v\right)  }\left\vert \int_{B\left(
x_{0},\ell\right)  }\frac{\left(  x-y\right)  ^{\perp}\cdot v}{\left\vert
x-y\right\vert ^{2}}\theta_{\ell}(y-x_{0})dy\right\vert ^{2}dx_{0}\right).
\]
Taking $\ell>0$ very small, reduce the cone $C\left(  x,v\right)  $ to the
set
\[
C_{\ell}\left(  x,v\right)  \subset C\left(  x,v\right)
\]
of points $x_{0}$ such that
\[
\text{dist}\left(  x_{0},\partial C\left(  x,v\right)  \right)  \geq 2\ell.
\]
We then have%
\[
y\in C\left(  x,v\right)  \text{ if }y\in B\left(  x_{0},\ell\right)  \text{
with }x_{0}\in C_{\ell}\left(  x,v\right)
\]
and thus
\begin{align*}
v^{T}Q_\ell\left(  x,x\right)  v  &  \geq\E \left(\frac{\Gamma ^{2}p_{\min}}{\left(
2\pi\right)  ^{2}}\int_{C_{\ell}\left(  x,v\right)  }\left\vert \int_{B\left(
x_{0},\ell\right)  }\frac{\frac{1}{2}\left\vert x-y\right\vert \left\vert
v\right\vert }{\left\vert x-y\right\vert ^{2}}\theta_{\ell}(y-x_{0}%
)dy\right\vert ^{2}dx_{0}\right)\\
& =\E \left(\frac{\Gamma ^{2}p_{\min}\left\vert v\right\vert ^{2}}{4\left(
2\pi\right)  ^{2}}\int_{C_{\ell}\left(  x,v\right)  }\left\vert \int_{B\left(
x_{0},\ell\right)  }\frac{1}{\left\vert x-y\right\vert }\theta_{\ell}%
(y-x_{0})dy\right\vert ^{2}dx_{0}\right)\\
&  =\E \left(\frac{\Gamma ^{2}p_{\min}\left\vert v\right\vert ^{2}}{4\left(
2\pi\right)  ^{2}}\int_{C_{\ell}\left(  x,v\right)  }\left(  \theta_{\ell }\ast\frac
{1}{\left\vert \cdot\right\vert }\right) ^{2}(x-x_0)dx_{0}\right)\\
&  =\E \left(\frac{\Gamma ^{2}p_{\min}\left\vert v\right\vert ^{2}}{4\left(
2\pi\right)  ^{2}}\int_{C_{\ell}\left(  0,v\right)  }\left(  \theta_{\ell }\ast\frac
{1}{\left\vert \cdot\right\vert }\right) ^{2}(x_0)dx_{0}\right)\\
&\ge c(p_{\text{min}},\eta)|v|^2\E\left(\Gamma^2\right)\int_{\T^2}\left(  \theta_{\ell }\ast\frac
{1}{\left\vert \cdot\right\vert }\right)  ^{2}\left(  x_0\right)  dx_0.
\end{align*}
The last inequality is because the quantity $x_0\mapsto \left(  \theta_{\ell }\ast\frac
{1}{\left\vert \cdot\right\vert }\right)  ^{2}\left(  x_0\right) $ is rotationally invariant, hence the integral $\int_{C_{\ell}\left(  0,v\right)  }\left(  \theta_{\ell }\ast\frac
{1}{\left\vert \cdot\right\vert }\right)  ^{2}\left(  x_0\right)  dx_0$ does not depend on $v$. Since $|C(0,v)|\ge\eta$, we have that 
\[
\int_{C_{\ell}\left(  0,v\right)  }\left(  \theta_{\ell }\ast\frac
{1}{\left\vert \cdot\right\vert }\right)  ^{2}\left(  x_0\right)  dx_0\ge c\eta^{-1} \int_{\T^2}\left(  \theta_{\ell }\ast\frac
{1}{\left\vert \cdot\right\vert }\right)  ^{2}\left(  x_0\right)  dx_0.
\]
Let us investigate the problem of the scaling
in $\ell $ of the quantity $\int_{\T^2}\left(  \theta_{\ell }\ast\frac
{1}{\left\vert \cdot\right\vert }\right)  ^{2}\left(  x\right)  dx$. Given the
mollifier $\theta_{\ell }\left(  x\right)  =\ell ^{-2}\theta\left(
\ell ^{-1}x\right)  $, that we assume the best possible one (nonnegative, smooth, symmetric), let us introduce the smooth symmetric pdf, compactly supported in $B(0,2)$,
\[
\theta^{\left(  2\right)  }\left(  z\right)  :=\int\theta\left(  z-z^{\prime
}\right)  \theta\left(  z^{\prime}\right)  dz^{\prime}.
\]
Then%
\begin{align*}
\theta_{\ell }^{\left(  2\right)  }\left(  z\right)    & =\ell 
^{-2}\theta^{\left(  2\right)  }\left(  \ell ^{-1}x\right)  =\int%
\ell ^{-2}\theta\left(  \ell ^{-1}z-z^{\prime}\right)  \theta\left(
z^{\prime}\right)  dz^{\prime}\\
& \overset{z^{\prime}=\ell ^{-1}w}{=}\int\ell ^{-2}\theta\left(
\ell ^{-1}\left(  z-w\right)  \right)  \theta\left(  \ell ^{-1}w\right)
\ell ^{-2}dw\\
& =\int\theta_{\ell }\left(  z-w\right)  \theta_{\ell }\left(  w\right)
dw\\
& =\left(  \theta_{\ell }\ast\theta_{\ell }\right)  \left(  z\right)  .
\end{align*}
Below we shall use the formula%
\[
\theta_{\ell }^{\left(  2\right)  }\left(  y-y^{\prime}\right)  =\int%
\theta_{\ell }\left(  x-y\right)  \theta_{\ell }\left(  x-y^{\prime
}\right)  dx
\]
true because%
\begin{align*}
\theta_{\ell }^{\left(  2\right)  }\left(  y-y^{\prime}\right)    &
=\int\theta_{\ell }\left(  y-y^{\prime}-w\right)  \theta_{\ell }\left(
w\right)  dw\\
& \overset{w=x-y^{\prime}}{=}\int\theta_{\ell }\left(  y-x\right)
\theta_{\ell }\left(  x-y^{\prime}\right)  dx
\end{align*}
(recall $\theta$ is symmetric).
After these preliminaries, we have
\[
\int\left(  \theta_{\ell }\ast\frac{1}{\left\vert \cdot\right\vert
}\right)  ^{2}\left(  x\right)  dx=\int\left(  \int\theta_{\ell }\left(
x-y\right)  \frac{1}{\left\vert y\right\vert }dy\right)  ^{2}dx
\]%
\begin{align*}
& =\int\int\int\theta_{\ell }\left(  x-y\right)  \theta_{\ell }\left(
x-y^{\prime}\right)  \frac{1}{\left\vert y\right\vert }\frac{1}{\left\vert
y^{\prime}\right\vert }dydy^{\prime}dx\\
& =\int\int\theta_{\ell }^{\left(  2\right)  }\left(  y-y^{\prime}\right)
\frac{1}{\left\vert y\right\vert }\frac{1}{\left\vert y^{\prime}\right\vert
}dydy^{\prime}\\
& =\int\int\theta_{\ell }^{\left(  2\right)  }\left(  z\right)  \frac
{1}{\left\vert y\right\vert }\frac{1}{\left\vert y-z\right\vert }dydz\\
& =\int\left(  \int\frac{1}{\left\vert y\right\vert }\frac{1}{\left\vert
y-z\right\vert }dy\right)  \theta_{\ell }^{\left(  2\right)  }\left(
z\right)  dz.
\end{align*}
Now we have to understand first the bahavior of
\[
z\mapsto\int\frac{1}{\left\vert y\right\vert }\frac{1}{\left\vert
y-z\right\vert }dy.
\]
We can prove that, for $\left\vert z\right\vert \leq1$, 
\begin{align*}
\int_{\R^2}\frac
{1}{\left\vert y\right\vert }\frac{1}{\left\vert y-z\right\vert }dy\geq
\left\vert \log\left\vert z\right\vert \right\vert .
\end{align*}
Indeed, since $|y-z|\le |y|+|z|$, 
\begin{align*}
&\int_{\R^2}\frac{1}{|y|}\frac{1}{|y-z|}dy \ge \int\frac{1}{|y|}\frac{1}{|y|+|z|}dy\\
& \ge \int_0^1\frac{1}{\rho}\frac{1}{\rho+|z|}\rho \; d\rho =\log(1+|z|)-\log|z|\\
&\ge -\log|z|=|\log|z||. 
\end{align*}
Then, $\int\left(  \int\frac{1}{\left\vert y\right\vert }\frac{1}{\left\vert
y-z\right\vert }dy\right)  \theta_{\ell }^{\left(  2\right)  }\left(
z\right)  dz$ can be bounded below by
\begin{align*}
\int\theta_{\ell }^{\left(  2\right)  }\left(  z\right)  \left\vert
\log\left\vert z\right\vert \right\vert dz
&\ge \ell ^{-2}\int_{\left\vert z\right\vert \leq \ell }\theta^{(2)}(\ell^{-1}z)\left\vert
\log\left\vert z\right\vert \right\vert dz \\
& \ge - c_\theta \ell ^{-2}\int%
_{0}^{\ell }r\log rdr\\
& =c_\theta\left(-\ell ^{-2}\left[  \frac{r^{2}}{2}\log r\right]  _{r=0}^{r=\ell 
}+\ell ^{-2}\int_{0}^{\ell }\frac{r^{2}}{2}\frac{1}{r}dr\right)\\
& =c_\theta\left(-\ell ^{-2}\frac{\ell ^{2}}{2}\log\ell + \ell ^{-2}%
\frac{\ell ^{2}}{4}\right)\\
& = c_\theta\left(\left\vert \log\ell \right\vert +\frac{1}{4}\right),
\end{align*}
where w.l.o.g.
\[
c_\theta:=\inf_{z\in B(0,1)}\theta^{(2)}(z)>0.
\]
This yields that  
\[
\frac{v^{T}Q_\ell\left(  x,x\right)  v}{|v|^2}\ge c \; \E (\Gamma^2)|\log\ell|
\]
for some $c>0$ and any $\ell\in(0,1)$.
\end{proof}

\subsection{The 3D case}

Recall that we take $\D=\T^3$, hence the computation below can be based solely on the $\frac{1}{4\pi}\frac{x-y}{\left\vert x-y\right\vert ^{3}}\times$ part of the kernel $K(x,y)$ \eqref{kernel-3d}, with the other part from $\nabla_xh_{\D,x_{0}}\left(
x\right)  \times$ uniformly bounded. We also set $\tau=\infty$. The following theorem applies to any realization $\ell$ of $L$. We shall use the notation $Q_\ell(x,y)$ and $\Q_\ell$ for fixed $\ell$, similarly to what is done in the 2D case, while recalling \eqref{def-3d}.

In the next statement we set $\sigma^2=\E(\Gamma^2)$.

\begin{theorem}\label{thm-3D}
i) There exists a constant $C<\infty$ such that for every $v\in H$ and $\ell\in(0,1)$,
\begin{align*}
\frac{\left\langle \mathbb{Q}_\ell v,v\right\rangle }{\left\Vert v\right\Vert
_{H}^{2}} \leq C\;\E (U)\sigma^2.
\end{align*}
ii) 
There exists a constant $c>0$ 
such
that for all $x\in\T^3$, $v\in\mathbb{R}^{3}$ and $\ell\in(0,1)$,
\begin{align*}
\frac{v^{T}Q_\ell\left(  x,x\right)  v}{\left\vert v\right\vert ^{2}}\geq
c\;\E (U)\sigma^2\ell^{-1}.
\end{align*}
\end{theorem}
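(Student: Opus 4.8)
The plan is to follow the structure of the 2D proof (Theorem~\ref{thm-2D}), the one new feature being that the 3D noise is a stochastic integral \emph{in time} along the Brownian filament, so the spatial pairing must first be turned into an It\^o integral and estimated by the It\^o isometry; the time-stationarity of $(X_t)$ under the torus heat semigroup then plays the role that the single law $p_0$ plays in 2D. Using the scalar triple product $v(x)\cdot(K(x,y)\times w)=(v(x)\times K(x,y))\cdot w$ together with a stochastic Fubini, I would write, for $v\in H$,
\[
\int_{\T^3}v(x)\cdot K_\ell(x,X_\cdot)\,dx=\int_0^U F_\ell(X_t)\cdot dX_t,\qquad F_\ell:=\theta_\ell*\Big(\int_{\T^3}v(x)\times K(x-\cdot)\,dx\Big),
\]
and, for $v\in\R^3$, $K_\ell(x,X_\cdot)\cdot v=\int_0^U\big(v\times\kappa_\ell(x,X_t)\big)\cdot dX_t$ with $\kappa_\ell(x,z):=\int_{\T^3}K(x,y)\,\theta_\ell(y-z)\,dy$. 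As in 2D, the corrector $\nabla_x h_{\D,y}$ is uniformly bounded on $\T^3$ and contributes only $O(1)$ terms, so all estimates can be based on the singular part $-\tfrac{1}{4\pi}\tfrac{x-y}{|x-y|^3}$.

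For part (i), conditioning on $(\Gamma,U)$ (independent of $(X_t)$) and applying the It\^o isometry gives $\E_{\mathcal W}\big[(\int_0^U F_\ell(X_t)\cdot dX_t)^2\big]=\int_0^U\E_{\mathcal W}|F_\ell(X_t)|^2\,dt$. On the torus the law $p_t$ of $X_t$ satisfies $p_t\le p_{\max}$ for every $t$, since the heat semigroup is an $L^\infty$-contraction that preserves the bound on $p_0$; hence this is at most $U\,p_{\max}\,\|F_\ell\|_{L^2}^2$. Taking $\E$ and using $\Gamma\perp U$ yields $\langle\Q_\ell v,v\rangle\le\sigma^2\,\E(U)\,p_{\max}\,\|F_\ell\|_{L^2}^2$. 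Finally Young's inequality gives $\|F_\ell\|_{L^2}\le\|\theta_\ell\|_{L^1}\,\|\int v(x)\times K(x-\cdot)\,dx\|_{L^2}$, and the map $v\mapsto\int v(x)\times K(x-\cdot)\,dx$ is a Fourier multiplier of symbol size $|\widehat K(\mathbf k)|=|\mathbf k|^{-1}\le1$, so $\|F_\ell\|_{L^2}\le\|v\|_{L^2}=\|v\|_H$. This proves part (i) with $C=p_{\max}$ (up to the harmless $O(1)$ corrector contribution).

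For part (ii) the same reduction, now with the \emph{lower} heat-kernel bound $p_t\ge p_{\min}$ (the semigroup also preserves the lower bound $p_0\ge p_{\min}$ on the whole torus), gives
\[
v^{T}Q_\ell(x,x)\,v\ \ge\ \sigma^2\,\E(U)\,p_{\min}\int_{\T^3}\big|v\times\kappa_\ell(x,z)\big|^2\,dz.
\]
To deal with the anisotropy of the cross product, which kills the radial direction of the kernel, I would fix a unit vector $e\perp v$ and bound $|v\times\kappa_\ell(x,z)|\ge|(e\times v)\cdot\kappa_\ell(x,z)|$, where $u:=e\times v$ has $|u|=|v|$. This reduces the integrand to the \emph{scalar} quantity $u\cdot\kappa_\ell(x,z)=-\tfrac1{4\pi}\int\frac{u\cdot(x-y)}{|x-y|^3}\theta_\ell(y-z)\,dy$, the 3D analogue of the 2D scalar $(x-y)^\perp\cdot v/|x-y|^2$. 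Restricting $z$ to a cone $C_\ell(x,u)$ on which $u\cdot(x-y)\ge\tfrac12|u|\,|x-y|$ for all $y\in B(z,\ell)$ produces a definite sign, whence $|u\cdot\kappa_\ell(x,z)|\gtrsim|v|\,(\theta_\ell*|\cdot|^{-2})(x-z)$, and the rotational invariance of $(\theta_\ell*|\cdot|^{-2})^2$ lets me replace the cone by a fixed positive fraction of $\T^3$.

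It then remains to establish the scaling $\int_{\T^3}(\theta_\ell*|\cdot|^{-2})^2\gtrsim\ell^{-1}$. Using the self-convolution $\theta^{(2)}_\ell=\theta_\ell*\theta_\ell$ exactly as in the 2D computation, this integral equals $\int\big(\int|y|^{-2}|y-z|^{-2}\,dy\big)\theta^{(2)}_\ell(z)\,dz$; in $\R^3$ the inner Riesz composition satisfies $\int|y|^{-2}|y-z|^{-2}\,dy=c_0|z|^{-1}$, so after the change of variables $z=\ell z'$ the integral equals $c_0\,\ell^{-1}\int|z'|^{-1}\theta^{(2)}(z')\,dz'\gtrsim\ell^{-1}$. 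Combining the pieces gives $v^{T}Q_\ell(x,x)v\ge c\,\sigma^2\,\E(U)\,\ell^{-1}|v|^2$, as claimed. I expect the main obstacle to be the cross-product anisotropy in part (ii): the projection onto $e\times v$ and the cone argument are what convert the vector estimate into the scalar, definite-sign form that supports the $\ell^{-1}$ blow-up. A secondary technical point is verifying that the law of $X_t$ stays uniformly pinched in $[p_{\min},p_{\max}]$, which is precisely what allows the single It\^o time-integral to reproduce, in both directions, the static role of $p_0$ in the 2D argument.
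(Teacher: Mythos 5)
Your proposal is correct, and its skeleton (scalar triple product to turn the spatial pairing into an It\^o integral, It\^o isometry, the two-sided heat-kernel bounds $p_{\min}\le p_t\le p_{\max}$, then a geometric lower bound on the mollified kernel) is exactly the paper's. The two key estimates are, however, executed differently, and both of your versions are defensible. For part (i), the paper does not use a Fourier multiplier bound: it applies a Schur/Jensen-type trick, normalizing by the random constant $C_u=\int\int\theta_\ell(y-X_t)\frac{1}{4\pi|x-y|^2}\,dy\,dx\le C_{\T^3}$ and then bounding $\sup_x\mathcal W[\int_0^U\int\theta_\ell(y-X_t)|x-y|^{-2}\,dy\,dt]\le U C'_{\T^3}$ via $p_t\le p_{\max}$. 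Your route through Young's inequality and $|\widehat K(\mathbf k)|=|\mathbf k|^{-1}\le 1$ is cleaner and handles the whole kernel (including the corrector, which on the torus is absorbed into the exact symbol $i\mathbf k/|\mathbf k|^2$) in one stroke; it buys a sharper constant but is special to the translation-invariant setting, whereas the paper's Schur argument would survive on a general domain. For part (ii), the paper does not integrate $(\theta_\ell*|\cdot|^{-2})^2$ over a macroscopic cone: it localizes $z$ to a single ball $B(x^*,\ell/2)$ inside the cone at distance $2\ell$ from $x$, where the mollified kernel is pointwise of size $\ell^{-2}$ and the alignment $v\times(x-y)$ is uniform, so that squaring and $|B|\asymp\ell^3$ immediately give $\ell^{-4}\cdot\ell^3=\ell^{-1}$. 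Your version — projecting onto $e\times v$ to reduce to a scalar kernel with definite sign on the cone, then evaluating $\int(\theta_\ell*|\cdot|^{-2})^2\asymp\ell^{-1}$ via the self-convolution $\theta_\ell^{(2)}$ and the Riesz composition $\int|y|^{-2}|y-z|^{-2}dy=c_0|z|^{-1}$ — is a faithful transplant of the paper's own \emph{2D} lower-bound computation to 3D, and it additionally shows the $\ell^{-1}$ rate is sharp; the paper's single-ball argument is shorter and avoids the Riesz formula. Both arguments share the same minor gloss as the paper (the corrector and the non-rotational-invariance of the torus are dismissed as lower order), so there is nothing to penalize there.
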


\begin{remark}
We can choose the distribution of $(\Gamma,U)$ such that $\E(U)\sigma^2$ is small, then choose $\ell$ small enough such that $\E(U)\sigma^2\ell^{-1}$ is large, to fulfill the conditions in Theorem \ref{thm:eddy}.
\end{remark}

\begin{proof}
Taking any $v\in H$, we consider
\begin{align*}
\langle\mathbb{Q}_\ell v,v\rangle &  =\int_{\T^3}\int_{\T^3}v(x)^{T}Q_\ell (x,y)v(y)dxdy\\
&  =\mathbb{E}\left[  \Gamma ^{2} \left(  \int_{\T^3}v(x)\cdot \int_{\T^3}K\left(  x,y\right) \times \left(  \int_{0}^{U\wedge\tau}\theta
_{\ell}(y-X_{t})dX_{t}\right)  dydx\right)
^{2}\right]  .
\end{align*}
For any fixed realization of $(\Gamma , U)$, we take expectation over
$\mathcal{W}$ first
\begin{align*}
&  \langle\mathbb{Q}_\ell v,v\rangle \\
&  \sim\Gamma ^{2}\mathcal{W}\left[  \left(  \int_{\T^3}\int_{\T^3}\int_{0}%
^{U }\theta_{\ell}(y-X_{t})v(x)\cdot\frac{1}{4\pi}\frac
{x-y}{|x-y|^{3}}\times dX_{t}dydx\right)  ^{2}\right] \\
&  = \Gamma ^{2} \mathcal{W}\left[  \left(  \int_{0}^{U }\int%
_{\T^3}\int_{\T^3}\theta_{\ell}(y-X_{t})v(x)\times\frac{1}{4\pi}\frac{x-y}%
{|x-y|^{3}}dydx\cdot dX_{t}\right)  ^{2}\right] \\
&  =\Gamma ^{2} \mathcal{W}\left[  \int_{0}^{U }\left\vert
\int_{\T^3}\int_{\T^3}\theta_{\ell}(y-X_{t})v(x)\times\frac{1}{4\pi}\frac
{x-y}{|x-y|^{3}}dydx\right\vert ^{2}dt\right]
\end{align*}
where the last step is due to It\^{o} isometry. We further bound it above by
moving the norm inside the integral
\begin{align*}
&  \Gamma ^{2} \mathcal{W}\left[  \int_{0}^{U }\left(  \int%
_{\T^3}\int_{\T^3}\theta_{\ell}(y-X_{t})|v(x)|\frac{1}{4\pi}\frac{1}{|x-y|^{2}%
}dydx\right)  ^{2}dt\right] \\
&  =\Gamma ^{2} \mathcal{W}\left[  \int_{0}^{U }C_{u}^{2}\left(
\int_{\T^3}|v(x)|\frac{\int_{\T^3}\theta_{\ell}(y-X_{t})\frac{1}{4\pi}\frac
{1}{|x-y|^{2}}dy}{C_{u}}dx\right)  ^{2}dt\right] \\
&  \leq\Gamma ^{2} \mathcal{W}\left[  \int_{0}^{U }C_{u}\int%
_{\T^3}|v(x)|^{2}\int_{\T^3}\theta_{\ell}(y-X_{t})\frac{1}{4\pi}\frac{1}{|x-y|^{2}%
}dydxdt\right]
\end{align*}
where the random constant $C_{u}$
\[
C_{u}:=\int_{\T^3}\int_{\T^3}\theta_{\ell}(y-X_{t})\frac{1}{4\pi}\frac{1}{|x-y|^{2}%
}dydx\leq C_{\T^3}
\]
for some deterministic finite constant $C_{\T^3}$ (integrate first $dx$ then
$dy$). Set
\begin{align*}
C_{u}^{\prime}  &  :=\sup_{x\in \T^3}\mathcal{W}\left[  \int%
_{0}^{U }\int_{\T^3}\theta_{\ell}(y-X_{t})\frac{1}{4\pi}\frac
{1}{|x-y|^{2}}dydt\right]  .
\end{align*}
Recall that $X_{0}$ has density $p_{0}(x)$ which is bounded above uniformly by
$p_{\text{max}}$. Since the heat semigroup is an $L^{\infty}$-contraction, the
density of $X_{t}$ at any later time $t$ is bounded above by $p_{\text{max}}$,
thus we have
\begin{align*}
C^{\prime}_{u}  &  \leq Up_{\text{max}}\sup_{x\in \T^3}\int%
_{\mathbb{T}^{3}}\int_{\T^3}\theta_{\ell}(y-z)\frac{1}{4\pi}\frac{1}{|x-y|^{2}%
}dydz\leq U C^{\prime}_{\T^3}%
\end{align*}
(integrating first $dz$ then $dy$), for some deterministic finite constant
$C_{\T^3}^{\prime}$. We conclude with
\begin{align*}
\langle\mathbb{Q}v,v\rangle &  \le\E \left(  \Gamma ^{2}C^{\prime}%
_{u}\right)  \|v\|_{H}^{2}\leq C^{\prime}_{\T^3}\sigma^2\E \left(  
U\right)  \left\Vert v\right\Vert _{H}^{2}.
\end{align*}

Taking now any unit vector $v\in\mathbb{R}^{3}$, we consider for any
$x\in\T^3$, the quantity
\begin{align*}
v^{T}Q(x,x)v  &  =\mathbb{E}\left[  \Gamma ^{2}\left|  v\cdot u(x)\right|
^{2}\right]  .
\end{align*}
We again fix any realization of $(\Gamma , U, \ell)$, and take expectation
over $\mathcal{W}$ first
\begin{align*}
&  \mathcal{W}\left[  \Gamma ^{2}\left|  v\cdot u(x)\right|  ^{2}\right] \\
&  =\Gamma ^{2}\mathcal{W}\left[  \left(  \int_{\T^3}\int_{0}^{U 
}\theta_{\ell}(y-X_{t})v\cdot\frac{1}{4\pi}\frac{x-y}{|x-y|^{3}}\times
dX_{t}dy\right)  ^{2}\right] \\
&  = \Gamma ^{2}\mathcal{W}\left[  \left(  \int_{0}^{U }\int%
_{\T^3}\theta_{\ell}(y-X_{t})v\times\frac{1}{4\pi}\frac{x-y}{|x-y|^{3}}dy\cdot
dX_{t}\right)  ^{2}\right] \\
&  = \Gamma ^{2}\mathcal{W}\left[  \int_{0}^{U }\left\vert
\int_{\T^3}\theta_{\ell}(y-X_{t})v\times\frac{1}{4\pi}\frac{x-y}{|x-y|^{3}%
}dy\right\vert ^{2}dt\right]
\end{align*}
where the last step is due to It\^{o} isometry. 

Since $\D=\T^3$ compact, the density of $X_t$, denoted $p_t(z)$, converges to the uniform distribution, hence it is not hard to see that there exists some $p_{\text{min}}>0$ independent of $t$ such that 
\[
p_{t}(z)\geq p_{\text{min}},\quad z\in\T^3
,\;t\in\lbrack0,U].
\]
Then, we can continue to bound below $\mathcal{W}\left[  \Gamma ^{2}\left|
v\cdot u(x)\right|  ^{2}\right]  $ by
\begin{align*}
&  \Gamma ^{2}\int_{0}^{U}\int_{\T^3}\left\vert \int%
_{\T^3}\theta_{\ell}(y-z)v\times\frac{1}{4\pi}\frac{x-y}{|x-y|^{3}}dy\right\vert
^{2}p_{t}(z)dz\\
&  \geq\Gamma ^{2}p_{\text{min}}U\int_{\T^3}\left\vert
\int_{\T^3}\theta_{\ell}(y-z)v\times\frac{1}{4\pi}\frac{x-y}{|x-y|^{3}%
}dy\right\vert ^{2}dz.
\end{align*}
For any $x\in\T^3$, there exist a cone $C(x,v)$ and a ball
$B=B(x^{\ast},\ell/2)\subset C(x,v)$ of radius
$\ell/2$ with center $x^{\ast}$ with $|x-x^{\ast}|=2\ell$, such that provided
$z\in B$, we have all the $y$ that contribute to the above integral be
contained in $B(x^{\ast},3\ell/2)$ and $\ell/2\le
|x-y|\le7\ell/2$, and on the other hand the orientation of the cone is chosen
such that $v\times(x-y)$ are roughly in the same direction for all the $y$.
This implies that for some absolute constant $c>0$ and any $z\in B$,
\[
\left\vert \int_{\T^3}\theta_{\ell}(y-z)v\times\frac{1}{4\pi}\frac{x-y}%
{|x-y|^{3}}dy\right\vert \geq c|v|\int_{\T^3}\theta_{\ell}(y-z)\ell^{-2}%
dy=c\ell^{-2}.
\]
Thus, we have that, upon squaring and using $|B|\asymp\ell^{3}$,
\[
v^{T}Q_\ell(x,x)v\geq c\, p_{\text{min}}\E \left(  \Gamma ^{2}U\int_{B}\ell
^{-4}dz\right)  =c\, p_{\text{min}}\E \left(  \Gamma ^{2}U\right) \ell^{-1} .
\]
This completes the proof.
\end{proof}

\section{Acknowledgments}
We thank an anonymous referee for the contribution to Section \ref{sec:covariance}, which was prepared after the advice to compare better
our model with the FGF.

The research of the first author is funded by the European Union (ERC, NoisyFluid, n. 101053472). Views and opinions expressed are however those of the authors only and do not necessarily reflect those of the European Union or the European Research Council. Neither the European Union nor the granting authority can be held responsible for them.

\end{document}